\theoremstyle{plain}
\newtheorem{thm}{Theorem}[section]
\newtheorem{lem}[thm]{Lemma}
\newtheorem{prop}[thm]{Proposition}
\newtheorem{cor}[thm]{Corollary}
\theoremstyle{definition}
\newtheorem{defn}[thm]{Definition}
\newtheorem{exmp}[thm]{Example}
\newtheorem{rem}[thm]{Remark}
\newtheorem{ques}{Question}
\newcommand{\Z}{{\mathbb{Z}}}
\newcommand{\N}{\mathbb{N}}
\newcommand{\T}{\mathcal T}
\newcommand{\ep}{\varepsilon}
\DeclareMathOperator{\diam}{diam}
\begin{document}
\title{Mean equicontinuity, almost automorphy and regularity}
\author[F. Garc\'{\i}a-Ramos, T. J\"{a}ger and X. Ye]
{Felipe Garc\'{\i}a-Ramos, Tobias J\"{a}ger and Xiangdong Ye}
\date{\today }

\address[F. Garc\'{\i}a-Ramos]{CONACyT $\&$ Physics Institute of the Universidad Autonoma
de San Luis Potos, Av. Manuel Nava $\#$6, Zona Universitaria, C.P. 78290, San Luis
Potos, S.L.P., Mexico}
\email{fgramos@conacyt.mx}

\address[Tobias J\"{a}ger]{Institute of Mathematics, Friedrich Schiller University Jena, Germany}
\email{tobias.jaeger@uni-jena.de}

\address[X. Ye]
{Wu Wen-Tsun Key Laboratory of Mathematics, USTC, Chinese Academy of Sciences and Department of Mathematics, University of Science and
Technology of China, Hefei, Anhui 230026, China}
\email{yexd@ustc.edu.cn}

\begin{abstract}
The aim of this article is to obtain a better understanding and classification
of strictly ergodic topological dynamical systems with discrete spectrum. To
that end, we first determine when an isomorphic maximal equicontinuous factor
map of a minimal topological dynamical system has trivial (one point) fibres. In
other words, we characterize when minimal mean equicontinuous systems are almost
automorphic. Furthermore, we investigate another natural subclass of mean
equicontinuous systems, so-called diam-mean equicontinuous systems, and show
that a minimal system is diam-mean equicontinuous if and only if the maximal
equicontinuous factor is regular (the points with trivial fibres have full
Haar measure). Combined with previous results in the field, this provides a
natural characterization for every step of a natural hierarchy for strictly
ergodic topological models of ergodic systems with discrete spectrum. We also
construct an example of a transitive almost diam-mean equicontinuous system with
positive topological entropy, and we give a partial answer to a question of
Furstenberg related to multiple recurrence.

\end{abstract}
\keywords{Mean equicontinuity, regularity, frequent stability}
\subjclass[2010]{54H20, }
\maketitle

%\tableofcontents
%%%%%%%%%%%%%%%%%%%%%%%%%%%%%%%%%%%%%%%%%%%%%%%%%%%%%%%%%%%%%
%%%%%%%%%%%%%%%%%%%%%%%%%%%%%%%%%%%%%%%%%%%%%%%%%%%%%%%%%%%%%

\section{Introduction}

The family of ergodic systems with discrete spectrum was one of the early
objects of study of formal Ergodic Theory (by Von Neumann in 1932 \cite{V}). It
is one of the few families where the isomorphism problem is well understood
(using spectral isomorphism): as stated by the Halmos-Von Neumann Theorem
\cite{HV}, every isomorphism class can be represented with a simple object, a
group rotation on a compact abelian group. More generally, the celebrated
Jewett-Krieger Theorem states that every ergodic system is isomorphic to a
strictly ergodic (uniquely ergodic and minimal) model.

However, even in the case of discrete spectrum, strictly ergodic systems may be
very different from a group rotation. Surprisingly, recent work has shown that
the family of \emph{all} topological dynamical systems that are strictly ergodic
models of discrete spectrum systems may exhibit a rich range of behaviours (from
a topological point of view) \cite{LTY, G, DG, FGJO, G06, HLSY}.  It turns out
that these properties can be classified in a natural hierarchy, in which the
best-understood systems with topological discrete spectrum -- namely the
equicontinuous systems -- just present the simplest subclass and only a small
fraction of the whole family. Apart from the intrinsic interest, the aim for a
better understanding of this class of systems is also motivated by mathematical
studies of quasicrystals, whose associated dynamical models often fall into this
category (that is, they combine strict ergodicity and discrete spectrum
\cite{BLM}).
\medskip

For simplicity, we restrict to the case of $\Z_+$-actions and say a {\it
  topological dynamical system} (t.d.s) is a pair $(X,T)$ consisting of a
compact metric space $X$ and a continuous transformation $T:X\to X$. When it
comes to classifying strictly ergodic systems (with or without discrete
spectrum), an important and well-known concept is that of the \emph{maximal
  equicontinuous factor} $(X_{eq},T_{eq})$ and its factor map $\pi_{eq}:X\to
X_{eq}$. The latter exists for every topological dynamical system
(e.g.\ \cite{A,D}) and can be obtained in a constructive way via the regionally
proximal equivalence relation \cite{A}. If $(X,T)$ is minimal, then
$(X_{eq},T_{eq})$ is both minimal and uniquely ergodic, and we denote its unique
invariant measure by $\nu_{eq}$.  The following is a natural hierarchy for
strictly ergodic systems with discrete spectrum, based on the invertibility
properties of the map $\pi_{eq}$:

\medskip $\pi_{eq}$ is a conjugacy (1-1)

$\Rightarrow\pi_{eq}$ is regular (almost surely 1-1, that is, 1-1 on a set of full
measure)

$\Rightarrow$ $\pi_{eq}$ is isomorphic and almost 1-1 (that is, 1-1 on a residual
subset)

$\Rightarrow\pi_{eq}$ is isomorphic

$\Rightarrow$ $(X,\mu,T)$ has discrete spectrum (where $\mu$ is the unique
invariant measure of $(X,T)$).
\medskip

Note that if an ergodic system has discrete spectrum, then it has to be
isomorphic to an equicontinuous system, but this isomorphism is not necessarily
given by the maximal equicontinuous factor map (actually every ergodic system
has a uniquely ergodic mixing topological model \cite{L87}). In other words, the
Kronecker factor in the Halmos-Von Neumann Theorem may be strictly bigger than
the maximal equicontinuous factor, and this is the case if and only if there
exist $L^2(\mu)$-eigenfunctions that are not continuous. Hence, the last
property is strictly weaker than the previous one.\medskip

An important notion in this context is that of \emph{mean equicontinuity} (or
mean-L-stability, a weakening of equicontinuity). It was introduced by Fomin
already in 1951 \cite{F}, but in the next 60 years only a few papers studying
this property appeared \cite{A59,S}. In particular, it was left as an open
question if minimal mean equicontinuous systems (equipped with their unique
ergodic measure) have discrete spectrum.  Recently, this question was answered
independently by Li, Tu and Ye \cite{LTY}, and by Garc\'{\i}a-Ramos \cite{G}
using different methods (also see \cite{HLTY,GM}). Garc\'{\i}a-Ramos
characterized when topological models of an ergodic system have discrete
spectrum, using a weaker notion called \emph{$\mu$-mean equicontinuity}. Li, Tu
and Ye proved that mean equicontinuity is stronger than just discrete spectrum
because the isomorphism to the group rotation can be achieved using the maximal
equicontinuous factor. Downarowicz and Glasner \cite{DG} proved the converse,
that is, if the maximal equicontinuous factor of a minimal systems yields an
isomorphism then the system must be mean equicontinuous. Furthermore they showed
that some minimal mean equicontinuous systems are not almost automorphic (
$\pi_{eq}$ is not almost 1-1). Altogether, this means that the last two steps in
the above hierarchy can be characterized using ($\mu$-)mean equicontinuity.

In this paper, we thus aim to characterize the second and third step (the first
is just equicontinuity). To that end, we introduce the notion of {\em frequent
  stability} and {\em diam mean equicontinuity}, closely related to Lyapunov
stable sets and mean equicontinuity. Under the assumption of minimality, we show
that for mean equicontinous systems the maximal equicontinuous factor map
$\pi_{eq}$ is almost 1-1 if and only if $(X,T)$ is frequently stable (Theorem
\ref{maininsection3}).  Moreover, we show
%we assume throughout that $(X,T)$ is a minimal t.d.s. and show
 that $\pi_{eq}$ is regular if and only if $(X,T)$ is diam-mean equicontinuous
 (Theorem \ref{mainresult4}).

 Other families of systems with
 discrete spectrum are null systems (zero topological sequence entropy
 \cite{kush,goodman}) and tame systems (Glasner \cite{G06}). It is now
 understood that these properties are very closely related to the hierarchy. It
 is not difficult to see that every equicontinuous system is null. Kerr and Li
 proved that these notions can be characterized using combinatorial independence
 and that every null system is tame \cite{KL05}. Fuhrmann, Glasner, J\"ager and
 Oertel showed that for every minimal tame system $\pi_{eq}$ is regular
 \cite{FGJO}. See \cite{HLSY,G18,G} for previous weaker results.
\smallskip

In summary, we obtain the following hierarchy for minimal systems:\medskip

Equicontinuity ($\Leftrightarrow$ topological discrete spectrum)

$\Rightarrow$null ($\Leftrightarrow$no unbounded independence)

$\Rightarrow$tame ($\Leftrightarrow$ no infinite independence)

$\Rightarrow$diam-mean equicontinuous ($\Leftrightarrow$ $\pi_{eq}$  regular)

$\Rightarrow$mean equicontinous and frequently stable ($\Leftrightarrow$ $\pi_{eq}$  almost 1-1 and isomorphic)

$\Rightarrow$mean equicontinuous ($\Leftrightarrow$ $\pi_{eq}$ isomorphic)

$\Rightarrow$ $\mu-$mean equicontinuous. ($\Leftrightarrow$ discrete spectrum)\medskip

Note that there exist counter-examples showing every implication is strict
\cite[Section~5]{goodman}, \cite[Section 11]{KL07} or \cite{FK}, \cite[Remark
  5.8]{G18}, \cite[Example 5.1]{DK}, and \cite[Theorem 3.1]{DG}.\smallskip

 We should mention that some work has been done for non-minimal systems and
 non-ergodic measures \cite{FGL,QZ,HLTY}, but further work will be required to
 extend our results in this direction. In contrast to this, with the use of
 \cite{FGL}, the extension to locally compact amenable group actions should be
 straightforward (except for Section 6).

In addition to the above-mentioned results, we also study transitive almost
diam-mean equicontinuity, a weakening of diam-mean equicontinuity. We show that
transitive almost diam-mean equicontinuous systems may have positive topological
entropy (note that systems with discrete spectrum have zero entropy). So, at
least in this sense, almost diam-mean equicontinuous systems do not appear to be
that different to almost mean equicontinuous systems (in contrast to the minimal
case). Finally we give a partial answer to a question of Furstenberg related to
multiple topological recurrence.
\medskip

The paper is organized as follows. In Section 2, we give some basic notions in
t.d.s. In Section 3 we show that for mean equicontinous systems $\pi_{eq}$ is
almost 1-1 if and only if $(X,T)$ is frequently stable.  In Section 4 we study
the basic properties of diam-mean equicontinuous systems and we prove that
$\pi_{eq}$ is regular if and only if $(X,T)$ is diam-mean equicontinuous. In
Section 5 we consider diam-mean sensitivity and almost diam-mean equicontinuity.
Furstenberg asked if for every t.d.s. $(X,T)$ and $d\in \N$, there is $x\in X$
such that $(x,x,\ldots,x)$ is a minimal point for $T\times T^2\times \ldots
\times T^d$. We give a positive answer for the class of mean equicontinuous
systems in Section 6.

\medskip
\noindent {\bf Acknowledgments.} We thank Eli Glasner for useful discussion in
the early stage of the paper, and Gabriel Fuhrmann for pointing out a way to
simplify the proof of Theorem~\ref{t.frequent_stability} and further helpful
remarks. X. Ye was supported by NNSF of China (11431012), T.~J\"ager by a
Heisenberg grant of the German Research Council (DFG-grant OE 538/6-1) and
F. Garc\'{\i}a-Ramos by CONACyT (287764).

\section{Preliminaries}
Throughout this paper, we denote by ${\mathbb{Z}}_{+}$ and $\mathbb{N}$ the
sets of non-negative integers and natural numbers respectively.

\subsection{Subsets of ${\mathbb{Z}}_{+}$ }

Let $F$ be a subset of ${\mathbb{Z}}_{+}$. The \textbf{upper density} and
\textbf{upper Banach density} of $F$ are defined by
\[
\overline{D}(F)=\lim\sup_{n\rightarrow\infty}\frac{\#\{F\cap\lbrack
	0,n-1]\}}{n}%
\]
and
\[
BD^{\ast}(F)=\limsup_{N-M\rightarrow\infty}\frac{\#\{F\cap\lbrack
	M,N-1]\}}{N-M}=\limsup_{n\rightarrow\infty}\left\{  \sup_{N-M=n}%
\frac{\#\{F\cap\lbrack M,N-1]\}}{n}\right\}  ,
\]
respectively, where $\#\{\cdot\}$ denotes the cardinality of the set. It is
clear that $\overline{D}(F)\leq BD^{\ast}(F)$ for any $F\subset{\mathbb{Z}%
}_{+}$.

\subsection{Topological dynamics}

\bigskip We say $(X,T)$ is a \textbf{topological dynamical system (t.d.s.)} if
$X$ is a compact metric space (with metric $d$) and $T:X\rightarrow X$ is a
continuous function. We denote the forward orbit of $x\in X$ by
$orb(x,T)=\{x,Tx,\ldots\}$ and its orbit closure by $\overline{orb(x,T)}$.  We
say $x\in X$ is \textbf{transitive} if $\overline{orb(x,T)}=X$. On the other
hand a t.d.s. $(X,T)$ is \textbf{transitive }if for any non-empty open sets
$U,V\subset X$ there exists $n\in\mathbb{N}$ such that $T^{n}U\cap
V\neq\emptyset$ and is \textbf{minimal }if every point of $X$ is transitive.  We
call $x\in X$ an \textbf{equicontinuity point} if for any $\varepsilon>0$ there
is a $\delta>0$ such that for every $y\in X$ with $d(x,y)<\delta$, we have
$d(T^{n}x,T^{n}y)<\varepsilon$ for all $n\in{\mathbb{Z}}_{+}$. A t.d.s.  is
\textbf{equicontinuous} if every $x\in X$ is an equicontinuity point (note that
by compactness every equicontinuous t.d.s. is uniformly equicontinuous, so a
t.d.s. is equicontinuous if and only if the family $\{T^n\}$ is equicontinuous).

Let $(X,T)$ and $(X^{\prime},T^{\prime})$ be two t.d.s. We say $(X^{\prime
},T^{\prime})$ is a \textbf{factor} of $(X,T)$ if there exists a surjective
continuous map $f:X\rightarrow X^{\prime}$ such that $f\circ T=T^{\prime}\circ
f$ (we refer to $f$ as the factor map).  A factor map $f:X\rightarrow
X^{\prime}$ is \textbf{almost 1-1} if $\{y\in X^{\prime}:\#(f^{-1}(y)=1) \}$ is
residual. If $(X^{\prime},T^{\prime})$ is minimal then $f$ is almost 1-1 if and
only if $\{y\in X^{\prime}:\#(f^{-1}(y)=1) \}$ is non-empty.  Every
t.d.s. $(X,T)$ has a unique (up to conjugacy) \textbf{maximal equicontinuous
  factor (m.e.f.)}  $(X_{eq},T_{eq})$ \cite{EG}, that is, an equicontinuous
factor such that every other equicontinuous factor of $(X,T)$ is a factor of
$(X_{eq},T_{eq})$. We will denote the maximal equicontinuous factor map by
$\pi_{eq}$. Every transitive equicontinuous system is strictly ergodic and we
denote this measure with $\nu_{eq}$.

Given a t.d.s. the \textbf{Besicovitch pseudometric} is given by
\[
\rho_b(x,y):=\lim\sup_{n\rightarrow\infty}\frac{1}{n}\sum_{i=1}^{n}%
d(T^{i}x,T^{i}y)\text{.}%
\]

\section{Characterization of almost automorphic mean equicontinuous systems}

In this section we give a characterization of almost automorphic mean equicontinuous systems.
We begin with the following definition.
\begin{defn}
	A t.d.s. $(X,T)$ is \textbf{mean equicontinuous} if for every $\varepsilon>0$ there exists
	$\delta>0$ such that if $d(x,y)\leq\delta$ then $\rho_b(x,y)\leq\varepsilon$.
\end{defn}

Actually a t.d.s $(X,T)$ is mean equicontinuous if and only if
for every $\varepsilon>0$ there exist $\delta>0$ and $N\in\N$ such that if $d(x,y)\leq\delta$ then $\frac{1}{n}\sum_{i=1}^{n}
d(T^{i}x,T^{i}y)<\varepsilon$ for any $n\ge N$ \cite{QZ}.

 $B_{\delta}(x)$ represents the open ball of radius $\delta$ centred at $x$. For $A\subset X$, we denote the diameter of $A$ by $\diam (A)$.  The following definition is
 crucial for the main result of the section.
\begin{defn}
Let $(X,T)$ be a t.d.s. We say that $x\in X$ is a \textbf{frequently stable} point of
$(X,T)$, if for every $\varepsilon>0$ there exists $\delta>0$ such that
\begin{equation*}
\overline{D}\left\{  i\in\mathbb{Z}_{+}:\diam(T^{i}B_{\delta}(x))>\varepsilon
\right\}  <1.
\end{equation*}
\end{defn}

We will need the following lemmas.
\begin{lem}[\cite{LTY}, \cite{G}]\label{LTY-G}
If $(X,T)$ is mean equicontinuous then
$\rho_b(x,y)=0$ if and only if $\pi_{eq}(x)=\pi_{eq}(y).$
\end{lem}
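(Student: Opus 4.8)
The plan is to prove the two implications separately; mean equicontinuity will be needed only for the implication $\pi_{eq}(x)=\pi_{eq}(y)\Rightarrow\rho_b(x,y)=0$. For the other implication, suppose $\rho_b(x,y)=0$. Then $\frac1n\sum_{i=1}^{n}d(T^ix,T^iy)\to 0$, so for every $\varepsilon>0$ the set $\{i\in\Z_+:d(T^ix,T^iy)\geq\varepsilon\}$ has upper density $0$ and is in particular a proper subset of $\Z_+$. Hence $\inf_{i\in\Z_+}d(T^ix,T^iy)=0$, i.e.\ $(x,y)$ is a proximal pair; since a proximal pair is regionally proximal and $\pi_{eq}$ collapses regionally proximal pairs, we get $\pi_{eq}(x)=\pi_{eq}(y)$. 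Note that this half uses no equicontinuity assumption.

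For the converse I would consider
\[
E=\{(x,y)\in X\times X:\rho_b(x,y)=0\}
\]
and show it is a closed, $T\times T$-invariant equivalence relation. Reflexivity and symmetry are immediate, transitivity follows from the triangle inequality for the pseudometric $\rho_b$, and invariance from $\rho_b(Tx,Ty)=\rho_b(x,y)$, since shifting the index does not affect the limsup of a Cesàro average. Closedness is precisely where mean equicontinuity enters: given $\varepsilon>0$, choose $\delta>0$ with $d(a,b)\leq\delta\Rightarrow\rho_b(a,b)\leq\varepsilon$; if $(x_n,y_n)\to(x,y)$ with $\rho_b(x_n,y_n)=0$, then for $n$ large $\rho_b(x,y)\leq\rho_b(x,x_n)+\rho_b(x_n,y_n)+\rho_b(y_n,y)\leq 2\varepsilon$, so $\rho_b(x,y)=0$. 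The same estimate shows that $\rho_b\colon X\times X\to\mathbb R$ is continuous.

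Next I would pass to the quotient $Z=X/E$ with induced map $S$ and projection $q\colon X\to Z$. Since $E$ is closed, $Z$ is compact metrizable and $q$ is a factor map; since $\rho_b$ is continuous and separates $E$-classes, the formula $\bar\rho_b(q(x),q(y)):=\rho_b(x,y)$ defines a metric on $Z$ compatible with the quotient topology (using that a continuous bijection from a compact space onto a Hausdorff space is a homeomorphism). From $\rho_b(T^ix,T^iy)=\rho_b(x,y)$ it follows that $S$ is a $\bar\rho_b$-isometry, so $(Z,S)$ is equicontinuous. By the universal property of the maximal equicontinuous factor, $q$ factors through $\pi_{eq}$, i.e.\ there is a factor map $p\colon X_{eq}\to Z$ with $p\circ\pi_{eq}=q$. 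Hence $\pi_{eq}(x)=\pi_{eq}(y)$ implies $q(x)=q(y)$, that is $\rho_b(x,y)=0$, which finishes the proof. As a byproduct, $E$ coincides with $\ker\pi_{eq}$ and $(Z,S)$ is conjugate to $(X_{eq},T_{eq})$.

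The step I expect to be the main obstacle is the closedness of $E$: this is the only place mean equicontinuity is used, and it is exactly the point where the statement genuinely fails for general systems. The accompanying point-set topology — that $\bar\rho_b$ metrizes the quotient topology on $Z$ — is routine but should be spelled out with some care.
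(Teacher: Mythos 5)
Your proposal is correct. Note, however, that the paper does not prove Lemma~\ref{LTY-G} at all: it is imported as a known result from \cite{LTY} and \cite{G}, so there is no in-text argument to compare against. Your two-step proof is essentially the standard one behind those citations. The first half (if $\rho_b(x,y)=0$ then the pair is proximal, hence regionally proximal, hence collapsed by $\pi_{eq}$) is fine and, as you note, needs no mean equicontinuity; the Chebyshev-type density estimate is the right way to get proximality. The second half — showing that $E=\{\rho_b=0\}$ is a closed invariant equivalence relation (closedness being exactly where mean equicontinuity enters, via the triangle inequality for $\rho_b$), that $\bar\rho_b$ metrizes the quotient and the induced map is a $\bar\rho_b$-isometry, and then invoking maximality — is the argument used in \cite{LTY} (and in the group-rotation identification of \cite{G}). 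The only step worth spelling out is the appeal to the ``universal property'': the paper's stated definition of the m.e.f.\ only says that every equicontinuous factor is a factor of $(X_{eq},T_{eq})$, whereas you need the commuting factorization $p\circ\pi_{eq}=q$. This stronger form does hold and follows from the construction of $\pi_{eq}$ via the regionally proximal relation (any factor map onto an equicontinuous system collapses regionally proximal pairs, hence collapses the closed invariant equivalence relation generated by them, so it factors through $\pi_{eq}$); alternatively, your first half already shows $E\supseteq\ker\pi_{eq}$ would need exactly this, so it is worth one explicit sentence. With that point made precise, your proof is complete and has the added benefit of identifying $E$ with $\ker\pi_{eq}$, i.e.\ realizing $(X_{eq},T_{eq})$ concretely as the $\rho_b$-quotient.
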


\begin{lem}[\cite{DG}]\label{lemmadb}
	\label{lemmadg}Let $(X,T)$ and $(Y,S)$ be minimal t.d.s. and $
	\pi:X\rightarrow Y$ a factor map. Then either $\pi$ is almost 1-1 or there
	exists $\varepsilon >0$ such that $\diam(\pi^{-1}(y))>\varepsilon $ for
	every $y\in Y$.
\end{lem}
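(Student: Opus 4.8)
The plan is to combine a Baire category argument on $Y$ with the minimality of $(Y,S)$, via the upper semicontinuity of the fibre-diameter function. Write $\phi(y):=\diam\pi^{-1}(y)$. Each fibre is nonempty (as $\pi$ is onto) and compact, so $\phi(y)=0$ exactly when $\#\pi^{-1}(y)=1$; hence, by the fact recorded in Section~2, $\pi$ is almost $1$-$1$ if and only if $\phi(y)=0$ for some $y\in Y$. So it suffices to prove: if $\phi>0$ on all of $Y$, then $\inf_{y\in Y}\phi(y)>0$. First I would check that $\phi$ is upper semicontinuous: given $y_n\to y$, choose $a_n,b_n\in\pi^{-1}(y_n)$ with $d(a_n,b_n)=\phi(y_n)$; any subsequential limit $(a,b)$ of $(a_n,b_n)$ satisfies $\pi(a)=\pi(b)=y$ by continuity of $\pi$, so $d(a,b)\le\phi(y)$, and therefore $\limsup_n\phi(y_n)\le\phi(y)$. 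In particular each $F_n:=\{y:\phi(y)\ge 1/n\}$ is closed.

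Assuming $\phi>0$ everywhere, $Y=\bigcup_{n\ge1}F_n$, so by the Baire category theorem some $F_N$ has nonempty interior $W$. Since $(Y,S)$ is minimal, the forward orbit of every point of $Y$ is dense and hence meets the nonempty open set $W$; thus $Y=\bigcup_{k\ge0}S^{-k}W$, and by compactness $Y=\bigcup_{k=0}^{M}S^{-k}W$ for some $M\in\N$. Consequently, for each $y\in Y$ there is $k=k(y)\le M$ with $S^{k}y\in W$, i.e.\ $\diam\pi^{-1}(S^{k}y)\ge 1/N$.

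To conclude, I would transfer this bound back along the orbit: using $\pi^{-1}(S^{k}y)=T^{k}(\pi^{-1}(y))$, pick $u,v\in\pi^{-1}(y)$ with $d(T^{k}u,T^{k}v)\ge 1/N$ and let $\delta_k>0$ be a modulus of uniform continuity for $T^{k}$ (so $d(a,b)<\delta_k$ forces $d(T^{k}a,T^{k}b)<1/N$); then $d(u,v)\ge\delta_k$, hence $\phi(y)\ge\delta_k\ge\varepsilon:=\min_{0\le k\le M}\delta_k>0$. As $y$ was arbitrary, $\diam\pi^{-1}(y)\ge\varepsilon$ for all $y$, which is the second alternative. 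A key point is that the return time $k(y)$ is bounded by $M$, so only finitely many moduli $\delta_0,\dots,\delta_M$ are used — there is no need to control $\diam T^{n}(\cdot)$ uniformly in $n$ (which would require equicontinuity).

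The step I expect to be the main obstacle is this last transfer, precisely because $\phi$ is only upper, not lower, semicontinuous: smallness of fibres near $y$ does not by itself bound $\pi^{-1}(y)$, so the information genuinely has to be moved around the orbit. When $T$ is a homeomorphism the identity $\pi^{-1}(S^{k}y)=T^{k}(\pi^{-1}(y))$ is immediate; for a general continuous $T$ one has only the inclusion $\supseteq$, and bridging that gap needs an extra argument — for instance by working in the hyperspace $2^{X}$ with its induced map, inside the closed invariant set consisting of those closed $A\subseteq X$ on which $\pi$ is constant, which maps onto $(Y,S)$ and over which the whole fibres can be followed.
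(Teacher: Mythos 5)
The paper itself offers no proof of this lemma: it is quoted from Downarowicz--Glasner \cite{DG}, who work with homeomorphisms. Measured against that, your core argument --- upper semicontinuity of $\phi(y)=\diam\pi^{-1}(y)$, Baire category applied to the closed sets $F_n$, a uniformly bounded return time to $W=\inte F_N$ from minimality of $(Y,S)$ plus compactness, and the pull-back using only the finitely many moduli of uniform continuity of $T^0,\dots,T^M$ --- is correct and complete in that invertible setting, and is essentially the standard proof; quoting the Section~2 fact that one singleton fibre forces almost 1-1 (for minimal $Y$) is legitimate, and the strict inequality in the statement is recovered by taking $\varepsilon=\tfrac12\min_{k\le M}\delta_k$.

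The soft spot is exactly where you located it, but both of your remarks about it need correction. First, the identity $\pi^{-1}(S^k y)=T^k(\pi^{-1}(y))$ is not a consequence of $T$ being a homeomorphism: the easy argument shows it holds as soon as $T$ is surjective (automatic for minimal systems) and $S$ is \emph{injective}, and it can fail with $T$ invertible but $S$ not --- take $X$ a two-sided minimal subshift, $Y$ its one-sided version and $\pi$ the map forgetting the past; over a left-special word the fibre of $Sy$ strictly contains $T(\pi^{-1}(y))$. Conveniently, the corrected hypothesis is all the present paper needs, since the lemma is only applied with $Y=X_{eq}$, a minimal isometric and hence invertible system, even when $T$ itself is non-invertible. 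Second, the one-sentence hyperspace repair does not close the remaining (fully non-invertible) case: the assignment $y\mapsto\pi^{-1}(y)$ is \emph{not} equivariant for the induced map on $2^X$ --- the failure of equivariance is precisely the failing inclusion --- so inside the closed invariant family of $\pi$-constant closed sets one can only follow $T^k(\pi^{-1}(y))$, which is where you started; as written, ``the whole fibres can be followed'' begs the question. So either state the lemma, as in the cited source, for invertible systems, or prove it under ``$S$ injective, $T$ surjective'' as above; the general case is neither established by your sketch nor addressed by the paper, which simply cites \cite{DG}.
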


%Let $\pi_{eq}:X\rightarrow Y$ be a continuous map between two t.d.s. We say $\pi_{eq}$ is semi-open,
%if $\pi_{eq}(U)$ contains a non-empty open subset of $Y$  for each non-empty open subset $U$ of $X$.
%This is a known fact.

\begin{lem}\label{known} Let $\pi:(X,T)\rightarrow (Y,S)$ be a factor map between two minimal t.d.s.
Then for each $\delta>0$ there is $\eta>0$ such that the
image under $\pi$ of any $\delta$-ball in $X$ contains an $\eta$-ball in $Y$.
\end{lem}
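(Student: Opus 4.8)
The statement packages together a qualitative fact (for each $x$, the set $\pi(B_\delta(x))$ has nonempty interior in $Y$) and a uniformity statement (that interior contains a ball of a radius $\eta$ not depending on $x$). The plan is to isolate the qualitative ingredient as the \emph{semi-openness} of $\pi$ --- the image under $\pi$ of any nonempty open subset of $X$ has nonempty interior in $Y$ --- and then to extract uniformity by a standard compactness argument. Semi-openness of factor maps between minimal systems is classical (see, e.g., \cite{A}), so the only genuinely new work is the bookkeeping for the uniform bound; I nonetheless recall the argument for completeness below.

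Granting semi-openness, the uniformity step runs as follows. Fix $\delta>0$. By compactness cover $X$ by finitely many balls $B_{\delta/2}(x_1),\dots,B_{\delta/2}(x_k)$. For each $i$, semi-openness applied to the nonempty open set $B_{\delta/2}(x_i)$ yields $y_i\in Y$ and $\eta_i>0$ with $B_{\eta_i}(y_i)\subseteq\pi(B_{\delta/2}(x_i))$; put $\eta:=\min_{1\le i\le k}\eta_i>0$. Now let $x\in X$ be arbitrary and choose $i$ with $x\in B_{\delta/2}(x_i)$. Then $B_{\delta/2}(x_i)\subseteq B_\delta(x)$ by the triangle inequality (if $d(x,x_i)<\delta/2$ and $d(z,x_i)<\delta/2$ then $d(z,x)<\delta$), hence $\pi(B_\delta(x))\supseteq\pi(B_{\delta/2}(x_i))\supseteq B_{\eta_i}(y_i)\supseteq B_\eta(y_i)$, which is exactly the assertion.

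To recall why $\pi$ (and more generally any factor map between minimal systems) is semi-open: let $O\subseteq X$ be nonempty open and pick a nonempty open $V$ with $\overline V\subseteq O$. By minimality of $(X,T)$ every forward orbit meets $V$, so $X=\bigcup_{i\ge 0}T^{-i}(V)$, and by compactness $X=\bigcup_{i=0}^N T^{-i}(V)$ for some $N$. Applying $\pi$ and using $\pi(T^{-i}(V))\subseteq\pi(T^{-i}(\overline V))=:D_i$ gives $Y=\bigcup_{i=0}^N D_i$, a finite union of closed sets (continuous images of compact sets), so by the Baire category theorem some $D_{i_0}$ has nonempty interior $W$. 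From $S^{i_0}\circ\pi=\pi\circ T^{i_0}$ one checks $S^{i_0}(W)\subseteq\pi(\overline V)\subseteq\pi(O)$, so $\inte\pi(O)\neq\emptyset$ provided $S^{i_0}$ is itself semi-open; and $S$, being a continuous surjection of the minimal system $(Y,S)$ onto itself (i.e.\ a factor map $(Y,S)\to(Y,S)$), is semi-open, while compositions of semi-open maps are semi-open. I expect this last point to be the real obstacle: when $S$ is a homeomorphism it is trivial, but for non-invertible $S$ the image of an open set under $S$ need not be open, and showing it still has nonempty interior is precisely the non-trivial content of the semi-openness theorem for minimal systems --- the part one either cites or proves by the minimality-plus-Baire argument above applied, with care, to $S$ in place of $\pi$. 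Everything else in the proof is elementary.
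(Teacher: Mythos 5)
Your proof is correct and follows essentially the same route as the paper: a Baire-category argument driven by minimality to show $\pi(B_\delta(x))$ has nonempty interior, followed by a compactness argument to make $\eta$ uniform (the paper only gestures at the latter with ``using compactness again'', so your explicit $\delta/2$-cover with $\eta:=\min_i\eta_i$ is that step spelled out, and it is fine). The only real divergence is cosmetic: you cover $X$ by preimages $T^{-i}(V)$ and pass to the closed sets $\pi(T^{-i}(\overline V))$ before applying Baire, whereas the paper covers $X$ by forward images $T^{i}(B_\delta(x))$ and asserts directly that the sets $\pi(T^{i}B_\delta(x))=S^{i}(\pi(B_\delta(x)))$ would be meager. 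The obstacle you flag for non-invertible $S$ is genuine, but note that it afflicts the paper's own proof in exactly the same way: the ``finite union of meager sets'' step needs $S^{i}$ to carry a set with empty interior to a meager set, which is immediate only when $S$ is a homeomorphism (or already known to be semi-open). In the paper's application $(Y,S)=(X_{eq},T_{eq})$ is minimal equicontinuous, hence invertible, so both arguments close there; for a general non-invertible minimal $(Y,S)$ one must invoke semi-openness of minimal maps (Kolyada--Snoha--Trofimchuk), because your suggestion to rerun ``the same minimality-plus-Baire argument with $S$ in place of $\pi$'' is circular as stated --- that argument reduces semi-openness of $S$ to semi-openness of $S$ --- and the reference \cite{A} covers the invertible (flow) case.
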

\begin{proof}Fix $\delta>0$ and $x\in X$. By minimality of $(X,T)$ there exists
$n\in\mathbb{N}$ such that $\bigcup_{i=0}^{n}T^{i}(B_{\delta}(x))$ covers $X$.
Therefore $\pi(B_{\delta}(x))$ needs to have non-empty interior, since
otherwise $Y=\bigcup_{i=0}^{n}\pi(T^{i}(B_{\delta}(x))$ would be a finite
union of meager sets, contradicting Baire's Theorem. Using compactness again,
one obtains that there exists $\eta>0$ such that the
image under $\pi$ of any $\delta$-ball in $X$ contains an $\eta$-ball in $Y$.
\end{proof}

Now we are ready to show
%\textcolor{blue}{I think there was a lot of repetition in the lemma and theorem, so I inserted the lemma as a
%claim in the proof of the next theorem, I have no strong preference so I can go back to how it was. I added (i) implies (ii).}
\begin{thm}\label{maininsection3}
\label{t.frequent_stability} Let $(X,T)$ be a minimal mean equicontinuous
t.d.s. Then the following are equivalent.

\begin{itemize}
\item[(i)] $\pi_{eq}:(X,T)\rightarrow(X_{eq},T_{eq})$ is almost 1-1;

\item[(ii)] every $x\in X$ is frequently stable;

\item[(iii)] there exists at least one frequently stable point of $x;$

\item[(iv)] $\pi_{eq}:(X,T)\rightarrow(X_{eq},T_{eq})$ is almost finite to 1.
\end{itemize}
\end{thm}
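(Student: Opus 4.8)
The plan is to establish the cycle (i)$\Rightarrow$(ii)$\Rightarrow$(iii)$\Rightarrow$(i) together with the equivalence (i)$\Leftrightarrow$(iv). Two implications are immediate: (ii)$\Rightarrow$(iii) is the passage from ``every point'' to ``some point'', and (i)$\Rightarrow$(iv) holds since a one point fibre is finite. So the content lies in (i)$\Rightarrow$(ii), in (iv)$\Rightarrow$(i), and --- the hard part --- in (iii)$\Rightarrow$(i).

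For (i)$\Rightarrow$(ii) one does not even need mean equicontinuity. Fix $x\in X$ and $\ep>0$. Since $\pi_{eq}$ is almost 1-1 and $(X_{eq},T_{eq})$ is minimal, choose $y_0\in X_{eq}$ with $\pi_{eq}^{-1}(y_0)=\{p_0\}$; by upper semicontinuity of the fibre map pick $\tau>0$ with $\pi_{eq}^{-1}(B_\tau(y_0))\subseteq B_{\ep/2}(p_0)$, and by uniform continuity of $\pi_{eq}$ pick $\delta>0$ with $\pi_{eq}(B_\delta(x))\subseteq B_{\tau/2}(\pi_{eq}(x))$. As $T_{eq}$ is an isometry, for every $i$ with $d(T_{eq}^i\pi_{eq}(x),y_0)<\tau/2$ one gets $T^iB_\delta(x)\subseteq\pi_{eq}^{-1}\!\big(B_{\tau/2}(T_{eq}^i\pi_{eq}(x))\big)\subseteq\pi_{eq}^{-1}(B_\tau(y_0))\subseteq B_{\ep/2}(p_0)$, hence $\diam(T^iB_\delta(x))\le\ep$. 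The set of such $i$ is a return time set of the minimal system $(X_{eq},T_{eq})$, hence syndetic and of positive lower density, so the complement of $\{i:\diam(T^iB_\delta(x))\le\ep\}$ has upper density $<1$; thus $x$ is frequently stable. For (iv)$\Rightarrow$(i), suppose $\pi_{eq}$ is not almost 1-1; by Lemma~\ref{lemmadg} there is $\ep_0>0$ with $\diam(\pi_{eq}^{-1}(y))>\ep_0$ for all $y$. By (iv) pick $y_0$ with $\pi_{eq}^{-1}(y_0)=\{p_1,\dots,p_m\}$ finite. All the $p_j$ lie in one fibre, so Lemma~\ref{LTY-G} gives $\rho_b(p_j,p_k)=0$, i.e.\ $\frac1n\sum_{i=1}^n d(T^ip_j,T^ip_k)\to0$, for every $j,k$; summing over the finitely many pairs, $\frac1n\sum_{i=1}^n\diam\{T^ip_1,\dots,T^ip_m\}\to0$, so $\diam\{T^ip_1,\dots,T^ip_m\}<\ep_0$ for some (indeed a density one set of) $i$. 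But $\{T^ip_1,\dots,T^ip_m\}=T^i\pi_{eq}^{-1}(y_0)=\pi_{eq}^{-1}(T_{eq}^iy_0)$ (as $T$ is onto and $T_{eq}$ invertible), a fibre of $\pi_{eq}$, which must have diameter $>\ep_0$ --- a contradiction.

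The substantial step is (iii)$\Rightarrow$(i). Let $x_0$ be a frequently stable point and suppose, for a contradiction, that $\pi_{eq}$ is not almost 1-1; fix $\ep_0>0$ as above. Frequent stability (with $\ep=\ep_0/3$) provides $\delta>0$ so that $G:=\{i:\diam(T^iB_\delta(x_0))\le\ep_0/3\}$ has positive lower density, and Lemma~\ref{known} provides $\eta>0$ and $y^*\in X_{eq}$ with $B_\eta(y^*)\subseteq\pi_{eq}(B_\delta(x_0))$, hence $B_\eta(T_{eq}^iy^*)\subseteq\pi_{eq}(T^iB_\delta(x_0))$ for all $i$, $T_{eq}$ being an isometry. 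The idea is then to average over $i\in G$ against the Haar measure $\nu_{eq}$ of the uniquely ergodic system $(X_{eq},T_{eq})$: each ball $B_\eta(T_{eq}^iy^*)$ has $\nu_{eq}$-measure $\nu_{eq}(B_\eta(y^*))>0$ and $G$ has positive lower density, so a Fatou/Markov estimate yields a set $W$ with $\nu_{eq}(W)>0$ such that, for every $z\in W$, the set $G_z:=\{i\in G:z\in B_\eta(T_{eq}^iy^*)\}$ has positive upper density. For $z\in W$ and $i\in G_z$ there is $b_i\in B_\delta(x_0)$ with $\pi_{eq}(T^ib_i)=z$, and since $\diam(T^iB_\delta(x_0))\le\ep_0/3$ and $T^ix_0\in T^iB_\delta(x_0)$, the forward orbit of $x_0$ comes within $\ep_0/3$ of the fibre $\pi_{eq}^{-1}(z)$ along $G_z$. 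Taking Hausdorff limits of $T^i\overline{B_\delta(x_0)}$ along a subsequence of $G_z$ produces a closed set $F$ with $\diam F\le\ep_0/3$, with $\pi_{eq}(F)$ containing an $\eta$-ball, and with $\mu(F)>0$ for any $T$-invariant measure $\mu$ (which has full support by minimality, $\mu$ being the unique such measure by mean equicontinuity).

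The remaining and most delicate point --- the step I expect to require the most care --- is to play this against $\diam(\pi_{eq}^{-1}(z))>\ep_0$: combining the minimality of $(X,T)$, the continuity of the Besicovitch metric $\rho_b$ (a consequence of mean equicontinuity), Lemma~\ref{LTY-G} (fibres of $\pi_{eq}$ are exactly the $\rho_b$-classes) and the fact that $\pi_{eq}$ is a measure isomorphism, one forces some fibre to have diameter $\le\ep_0/2$, the desired contradiction. What makes this delicate, and where unique ergodicity of $X_{eq}$ is essential, is that frequent stability only supplies a set $G$ of good times of positive density --- not a syndetic set --- so one cannot simply intersect $G$ with return time sets; the averaging argument is the device that circumvents this obstruction.
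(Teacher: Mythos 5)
Your handling of the routine implications is fine: (ii)$\Rightarrow$(iii) and (i)$\Rightarrow$(iv) are indeed trivial; your (i)$\Rightarrow$(ii), using upper semicontinuity of the fibre at a one-point fibre and syndeticity of return times to $B_{\tau/2}(y_0)$, is a correct (and slightly more elementary) variant of the paper's argument, which instead uses unique ergodicity of $(X_{eq},T_{eq})$ to get positive density of visits; and your (iv)$\Rightarrow$(i) is essentially the paper's proof (Lemma~\ref{LTY-G} on the finite fibre, surjectivity of $T$ plus invertibility of $T_{eq}$ to identify $T^i\pi_{eq}^{-1}(y_0)$ with $\pi_{eq}^{-1}(T_{eq}^iy_0)$, then Lemma~\ref{lemmadg}).

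The genuine gap is in (iii)$\Rightarrow$(i), which is the substance of the theorem, and your argument stops exactly where the work begins. The objects you construct --- the positive-measure set $W$ and the Hausdorff limit $F$ with $\diam F\le\ep_0/3$, $\pi_{eq}(F)\supseteq B_\eta(w)$ and $\mu(F)>0$ (note that positivity of $\mu(F)$ needs upper semicontinuity of $\mu$ along Hausdorff limits together with $\mu(T^iA)\ge\mu(A)$, not full support, but this is repairable) --- do not by themselves contradict $\diam(\pi_{eq}^{-1}(z))>\ep_0$ for all $z$: a single closed set of small diameter and positive measure which meets every fibre over a ball is perfectly compatible with all fibres being large, and this is exactly the situation in the Downarowicz--Glasner examples of minimal mean equicontinuous systems that are not almost automorphic. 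So the final sentence of your sketch, ``combining minimality, continuity of $\rho_b$, Lemma~\ref{LTY-G} and the measure isomorphism one forces some fibre to have diameter $\le\ep_0/2$,'' is an assertion of the conclusion, not an argument; no fibre and no mechanism is exhibited. What is actually needed, and what the paper supplies at this point, is a uniform ``two-bunch'' statement obtained by compactness: for every $\eta>0$ there is $\kappa>0$ such that every $\eta$-ball in $X_{eq}$ carries two sets $A_1,A_2$ of $\nu_{eq}$-measure $\ge\kappa$ whose images under the measure-theoretic inverse $\rho$ of $\pi_{eq}$ are $\ep/4$-separated; then, for an arbitrary ball $B_\delta(x)$, a measurable selection $\varphi$ of $\pi_{eq}$ over $B_\eta(y)\subseteq\pi_{eq}(B_\delta(x))$ (Jankov--von Neumann, via Lemma~\ref{known}), Lemma~\ref{LTY-G} and dominated convergence show that on a set of times of density one the selection agrees with $\rho$ up to $\ep/8$ outside a set of measure $<\kappa/2$, forcing $T^iB_\delta(x)$ to come close to both $\rho(A_1)$ and $\rho(A_2)$ and hence to have diameter $\ge\ep/4$ at a density-one set of times, contradicting frequent stability of any point. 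Your averaging step only yields positive upper density of single-fibre ``hits'' near the orbit of $x_0$, which is strictly weaker; as written, (iii)$\Rightarrow$(i) is not proved.
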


\begin{proof}
The fact that (ii) implies (iii), and (i) implies (iv) are obvious.\smallskip

First we will prove (iv) implies (i). Suppose that there exists $y\in X_{eq}$
such that $\pi_{eq} ^{-1}(y)$ is finite. Using Lemma \ref{LTY-G} we have that
for every $\varepsilon >0$ there exists $n\in \mathbb{N}$ such that
$d(T^{n}x,T^{n}x^{\prime })\leq \varepsilon $ for every $x,x^{\prime }\in
\pi_{eq} ^{-1}(y)$. Note that $T^nx, T^nx'\in \pi_{eq}^{-1}(T^ny)$. Using
minimality we have that $T:X\rightarrow X$ is surjective and hence for each
$z\in\pi_{eq}^{-1}(T^n_{eq}y)$ there is $x\in \pi_{eq}^{-1}(y)$ such that
$T^n(x)=z$. Hence $d(z,z')<\varepsilon$ for all $z,z'\in \pi_{eq}^{-1}(T^n_{eq}y)$.
Using Lemma~ \ref{lemmadg} we conclude that $\pi_{eq} $ is almost~1-1.

\medskip
%Now we will prove (i) implies (ii).
% Let
%\[
%F = \{y\in X_{eq}\mid \#\pi_{eq}^{-1}(y)=1\} \neq \emptyset.
%\]
% Given $\eta>0$, consider the mapping
%\[
%D_\eta:X_{eq}\to\mathbb{R}^+,\ y\mapsto \diam(\pi_{eq}^{-1}(B_\eta(y))).
%\]
%Then $D_\eta(y) \to 0$ as $\eta\to 0$ for all $y\in F$, and moreover $D_\eta$
%is upper semicontinuous. Consequently, given $\varepsilon>0$ there exists
%$\eta>0$ and a non-empty open set $G_\varepsilon\subseteq X_{eq}$ such that
%$\diam(\pi_{eq}^{-1}(B_\eta(y))<\varepsilon/2$ for all $y\in G_\varepsilon$. By
%strict ergodicity, we have that
%\[
%\overline{D}(\left\{  n\in\mathbb{N}:T^{n}(y)\in G_{\varepsilon}\right\})  >0
%\]
%for a $y\in X_{eq}$. Let $x\in \pi_{eq}^{-1}(y)$ choose $\delta>0$ such that
%$\diam(\pi_{eq}(B_\delta(x)))<\eta$. Then
%$T^n(B_\delta(x))\subseteq \pi_{eq}^{-1}(B_\eta(T^n_{eq}(\pi_{eq} x)))$ for all
%$n\in\N$, and consequently $\diam(T^n(B_\delta(x))< \varepsilon/2$ whenever
%$T^n_{eq}(\pi_{eq} x)\in G_\varepsilon$. This proves $(X,T)$ is frequently stable.  \\

Now we will prove (i) implies (ii).  By hypothesis there exists $y_0$ such that
$|\pi_{eq}^{-1}(y_0)|=1$. Let $\varepsilon>0$. There exists $\eta>0$ such that
$\diam(\pi_{eq}^{-1}(B_\eta(y)))\leq \varepsilon$ for every $y\in B_\eta(y_0)$.
Since $\nu_{eq}$ is fully supported, $B_\eta(y_0)$ has positive measure (and
contains a smaller ball with positive measure and boundary with null
measure). Thus by strict ergodicity, we have that
\[
\overline{D}(\left\{  n\in\mathbb{N}:T_{eq}^{n}(y)\in B_\eta(y_0)\right\})  >0
\]
for every $y\in X_{eq}$.

Let $x\in X$. Since $\pi_{eq}$ is continuous there exists $\delta>0$ such that
$\diam(\pi_{eq}(B_\delta(x)))<\eta$.  Without loss of generality we may assume
$T_{eq}$ is an isometry. This implies that $B_\eta(T^n_{eq}(\pi_{eq}
x))=T^n_{eq}B_\eta(\pi_{eq} x)$ for every $n\in\N$.  Consequently \[
T^n(B_\delta(x))\subseteq \pi_{eq}^{-1}(B_\eta(T^n_{eq}(\pi_{eq} x)))
\]
for all
$n\in\N$. So, if $T^n_{eq}(\pi_{eq} x)\in B_\eta(y_0)$ then $\diam(T^n(B_\delta(x))< \varepsilon$;
since this happens with positive density we conclude $(X,T)$ is frequently stable.

\medskip

Hence, it remains to show that (iii) implies (i). Without loss of generality
suppose that $\diam(X)=1$. We assume that $\pi_{eq}$ is not almost 1-1 and we
will show that no point in $X$ is frequently stable.  As $(X,T)$ is minimal and
mean equicontinuous, \cite{LTY,DG} yields that $\pi_{eq}$ is a measure-theoretic
isomorphism between $(X,T,\mu)$ and $(X_{eq},T_{eq},\nu_{eq})$, where $\mu$ and
$\nu_{eq}$ are the unique measures on $(X,T)$ and $(X_{eq},T_{eq})$
respectively.  Hence, there exists $X_{0}\subset X$, $Y_0\subset X_{eq}$ such
that $\mu(X_{0})=1$ and $\pi_{eq}:X_{0}\rightarrow Y_{0}=\pi_{eq}(X_{0})$ is
bijective. Let $\rho:Y_{0}\rightarrow X_{0}$ be the inverse of $\pi_{eq}$. This
implies that $\mu(A)=\nu_{eq}(\{y\in Y\mid\rho(y)\in A\}$ for every measurable
set $A\subseteq X$.

Since $\pi_{eq}$ is not almost 1-1, we have that $\left\vert \pi_{eq}
^{-1}(y)\right\vert >1$ for all $y\in X_{eq}$. By Lemma \ref{lemmadb}, there
exists $\varepsilon>0$ such that $\diam(\pi_{eq}^{-1}(y))>\varepsilon$ for every
$y\in X_{eq}$.\\

\medskip
\noindent {\bf Claim:} for any $\eta>0$ there exists $\kappa=\kappa(\eta)>0$ such that for
any $y\in {\color{blue}Y}$ and any $x\in\pi_{eq}^{-1}(y)$ we have that $\mu(B_{\varepsilon
/8}(x)\cap\pi_{eq}^{-1}(B_{\eta}(y))\geq\kappa$.

\medskip
Suppose for a contradiction that this is not the case. Then there exist
$y_{n}\in Y_0$ and $x_{n}\in\pi_{eq} ^{-1}(B_{\eta}(y_{n}))$ such that
$\lim_{n\rightarrow\infty}\mu(B_{\varepsilon
  /8}(x_{n})\cap\pi_{eq}^{-1}(B_{\eta}(y_{n}))=0$. By minimality, the topological
support of $\mu$ is all of $X$, so that every non-empty open subset of $X$ has
positive measure. In particular, if $y=\lim_{n\rightarrow \infty}y_{n}$ and
$x=\lim_{n\rightarrow\infty}x_{n}$ (where we go over to convergent subsequences
if necessary to ensure existence of the limits), the open set
$U=B_{\varepsilon/16}(x)\cap\pi_{eq}^{-1}(B_{\eta/2}(y))$ has positive measure. However,
for $n$ large enough $U$ is contained in $B_{\varepsilon
  /8}(x_{n})\cap\pi_{eq}^{-1}(B_{\eta}(y_{n}))$, leading to a contradiction. This
proves the claim.
\medskip

If we apply the above claim to any $y\in Y_0$ and two points $x_{1},x_{2}\in
\pi_{eq}^{-1}(y)$ with $d(x_{1},x_{2})>\varepsilon/2$ and define $A_{j}=\pi
\left(B_{\varepsilon/8}(x_{j})\cap\pi_{eq}^{-1}(B_{\eta}(y)\cap Y_0)\right)$ for
$j\in\left\{ 1,2\right\} $, then we obtain the following statement.%

\begin{equation}\tag{$*$} \label{c.frequent_stability}
	\begin{split} & \textit{ For every $\eta >0$ there exists $\kappa >0$ such that for all $y\in
			Y_{0}$ there are $A_1(y),A_2(y)\subset B_{\eta}(y)$}\\ &
          \textit{ such that $\nu_{eq}(A_1(y)),\nu_{eq}(A_2(y))>\kappa$ and
            $d(\rho (A_{1}(y)),\rho (A_{2}(y))) >\varepsilon /4$.}
	\end{split}
\end{equation}

Let $x\in X$ and $\delta>0$. Due to Lemma \ref{known} there exist $\eta>0$ and
$y\in \pi_{eq}(B_{\delta}(x))$ such that
$B_{\eta}(y)\subseteq\pi_{eq}(B_{\delta}(x))$.  Now, let
$\varphi:B_{\eta}(y)\rightarrow B_{\delta}(x)$ be a measurable mapping that
satisfies $\pi_{eq}\circ\varphi(y^{\prime})=y^{\prime}$ for all $y^{\prime}\in
B_{\eta}(y)$ (such a $\varphi$ exists by the well known result of Jankov-von Neumann). Then Lemma~\ref{LTY-G} implies that
\[
\ \lim_{n\rightarrow\infty}\frac{1}{n}\sum_{i=0}^{n}d(T^{i}\varphi(y^{\prime}),
T^{i}\rho(y^{\prime})) \ = \ 0
\]
for all $y^\prime\in B_{\eta}(y)$. Hence, dominated convergence yields
\[\
  \lim_{n\rightarrow\infty}\frac{1}{n}\sum_{i=0}^{n}\int_{B_{\eta}(y)}d(T^{i}\varphi(y^{\prime}),
  T^{i}\rho(y^{\prime}))\ d\nu_{eq}(y^{\prime})\ =\ 0\ ,
\]
which further implies
\[
\overline{D}\left\{  i\in\mathbb{Z}_{+}\mid\nu_{eq}\{\ y^{\prime}\in B_{\eta
}(y)\cap Y_0:d(T^{i}\varphi(y^{\prime}),T^{i}\rho(y^{\prime}))>\varepsilon/8\}\geq
\kappa/2\right\}  \ =\ 0\ .
\]
Consequently, we obtain that
\[
\overline{D}\left\{ i\in\mathbb{Z}_{+}\mid\nu_{eq}\{\ y^{\prime}\in B_{\eta }(y)\cap
Y_0:d(T^{i}\varphi(y^{\prime}),T^{i}\rho(y^{\prime}))\leq\varepsilon/8\}>
\nu_{eq}(B_{\eta}(y))-\kappa/2\right\} \ = \ 1.
\]
For any $i\in\mathbb{Z}_{+}$, let $A_1(T^i_{eq}(y))$ and $A_2(T^i_{eq}(y))$ be
given by (\ref{c.frequent_stability}). Then
\[
\nu(A_1(T^i_{eq}(y))),\nu(A_2(T^i_{eq}(y)))>\kappa \ .
\] Hence, if
\[
\nu\{\ y^{\prime}\in B_{\eta }(y)\cap
Y_0:d(T^{i}\varphi(y^{\prime}),T^{i}\rho(y^{\prime}))\leq\varepsilon/8\}>
\nu(B_{\eta}(y))-\kappa/2
\]
 we obtain
\[
A_{j}(T^i_{eq}(y)) \cap\{\ y^{\prime}\in B_{\eta}(y)\cap Y_0:d(T^{i}\varphi(y^{\prime}),T^{i}%
\rho(y^{\prime}))\leq\varepsilon/8\}\neq\emptyset,%
\]
for every $j\in\left\{ 1,2\right\}$. The fact that this happens for a set of $i$
of full density and we also have $d(\rho (A_{1}(T^i_{eq}(y))),\rho
(A_{2})(T^i_{eq}(y))) >\varepsilon /4$ yields
\[
\overline{D}\left\{  i\in\mathbb{N}\mid\exists y_{1}(i),y_{2}(i)\in B_{\eta
}(y):d(\varphi T_{eq}^{i}(y_{1}(i)),\varphi T_{eq}^{i}(y_{2}(i)))\geq\varepsilon/4\right\}
\ =\ 1\ .
\]
Since $\pi_{eq}$ is a factor and $\pi_{eq}\circ\varphi(y^{\prime})=y^{\prime}$ for all $y^{\prime}\in
B_{\eta}(y)$, then
\[
\overline{D}\left\{  i\in\mathbb{N}\mid\exists y_{1}(i),y_{2}(i)\in B_{\eta
}(y):d(T^{i}\varphi(y_{1}(i)),T^{i}\varphi(y_{2}(i)))\geq\varepsilon/4\right\}
\ =\ 1\ .
\]
Consequently, as $\varphi(y_{1}(i)),\varphi(y_{2}(i))\in B_{\delta}(x)$, we
obtain
\[
\overline{D}\left\{  i\in\mathbb{N}\mid\diam(T^{i}B_{\delta}(x))\geq
\varepsilon/4\right\}  \ =\ 1\ .
\]
As $x\in X$ and $\delta>0$ were arbitrary, we conclude that there are no
frequently stable points of $(X,T)$. This completes the proof.
\end{proof}

Sometimes when $\pi_{eq}:(X,T)\rightarrow(X_{eq},T_{eq})$ is almost 1-1 it is said that $(X,T)$ is \emph{almost automorphic}.
The fact that a minimal t.d.s. is mean equicontinuous if and only if $\pi_{eq}$
is isomorphic (\cite{LTY,DG}), together with the previous theorem, implies the
following corollary.

\begin{cor}
Let $(X,T)$ be a minimal t.d.s. Then $\pi_{eq}$ is isomorphic and almost 1-1  if
and only if $(X,T)$ is mean equicontinuous and frequently stable.
\end{cor}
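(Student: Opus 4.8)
The plan is to read this off directly from Theorem~\ref{maininsection3} combined with the characterization of mean equicontinuity through the maximal equicontinuous factor map, so no new machinery is needed. First I would treat the forward implication: assume $\pi_{eq}$ is isomorphic and almost 1-1. Since $(X,T)$ is minimal and $\pi_{eq}$ is isomorphic, the results of Li--Tu--Ye and Downarowicz--Glasner \cite{LTY,DG} give that $(X,T)$ is mean equicontinuous. Having established mean equicontinuity, $(X,T)$ now satisfies the standing hypotheses of Theorem~\ref{maininsection3}, so from condition~(i) (almost 1-1-ness of $\pi_{eq}$) we pass to condition~(ii), namely that every point of $X$ is frequently stable; in particular $(X,T)$ is frequently stable.

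For the converse, assume $(X,T)$ is mean equicontinuous and frequently stable. Mean equicontinuity together with \cite{LTY,DG} again yields that $\pi_{eq}$ is isomorphic, so it remains to see that it is almost 1-1. But frequent stability of $(X,T)$ supplies at least one frequently stable point, which is condition~(iii) of Theorem~\ref{maininsection3}; since $(X,T)$ is minimal and mean equicontinuous, the equivalence (iii)$\Leftrightarrow$(i) gives that $\pi_{eq}$ is almost 1-1. Combining the two directions establishes the stated equivalence.

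The only point deserving a word of care is the meaning of ``$(X,T)$ is frequently stable'': one could read this as ``there exists a frequently stable point'' or as ``every point is frequently stable'', but by the equivalence (ii)$\Leftrightarrow$(iii) of Theorem~\ref{maininsection3} (valid under minimality and mean equicontinuity, both of which hold in each direction of the argument) these two readings coincide, so there is no genuine ambiguity and no real obstacle; the proof is essentially a bookkeeping assembly of the cited isomorphism theorem with the chain of equivalences (i)$\Leftrightarrow$(ii)$\Leftrightarrow$(iii)$\Leftrightarrow$(iv).
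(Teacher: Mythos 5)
Your proof is correct and follows exactly the route the paper intends: the paper deduces the corollary from the fact that minimal mean equicontinuity is equivalent to $\pi_{eq}$ being isomorphic (\cite{LTY,DG}) together with the equivalences of Theorem~\ref{maininsection3}, which is precisely your assembly of the two directions. Your remark that the two readings of ``frequently stable'' coincide via (ii)$\Leftrightarrow$(iii) is a sensible clarification but introduces nothing beyond the paper's argument.
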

\begin{ques} Does there exist a minimal t.d.s. that is frequently stable but
not almost automorphic (obviously non-mean equicontinuous)?
\end{ques}

We end the section with the following remark

\begin{rem}In \cite{DG}, a minimal mean equicontinuous t.d.s. where $\pi_{eq}$ is not almost 1-1
was constructed. Our result indicates that any such example cannot be frequently
stable and $\pi_{eq}$ has to be inifite to one at every point.
\end{rem}

%%%%%%%%%%%%%%%%%%%%%%%%%%%%%%%%%%%%%%%%%%%%%%%%%%%%%%%%%%%%%%%%%%%%%%%%%%%%%%%%%%%
%%%%%%%%%%%%%%%%%%%%%%%%%%%%%%%%%%%%%%%%%%%%%%%%%%%%%%%%%%%%%%%%%%%%%%%%%%%%%%%%%%%

\section{Characterization of diam-mean equicontinuity}

In this section we first give basic properties of diam-mean equicontinuous
systems, and then we show the main result of the section, i.e. that minimal
system are diam-mean equicontinuous if and only if their maximal equicontinuous
factor is regular.

\subsection{The basic properties of diam-mean equicontinuous systems}

Diam-mean equicontinuity was introduced in \cite{G}.

\begin{defn}
\label{weakeq}Let $(X,T)$ be a t.d.s. We say $x\in X$ is a \textbf{diam-mean
equicontinuity point} if for every $\varepsilon>0$ there exists $\delta>0$ such
that%
\[
\lim\sup_{N\rightarrow\infty}\frac{1}{N}\sum_{i=1}^{N}\diam(T^{i}B_{\delta
}(x))<\varepsilon.
\]
We say $(X,T)$ is \textbf{diam-mean equicontinuous }if every $x\in X$ is a
diam-mean equicontinuity point. We say $(X,T)$ is \textbf{almost diam-mean
equicontinuous }if the set of diam-mean equicontinuity points is residual.
\end{defn}
A t.d.s. is diam-mean equicontinuous if and only if for every $\varepsilon>0$
and $x\in X$ there exists $\delta>0$ such that
$\lim\sup_{N\rightarrow\infty}\frac{1}{N}\sum_{i=1}^{N}\diam(T^{i}B_{\delta}(x))<\varepsilon$.
It is not difficult to check that a transitive t.d.s. is almost diam-mean
equicontinuous if and only if there exists a diam-mean equicontinuity point and
that a minimal almost diam-mean equicontinuous system is always diam-mean
equicontinuous.

\begin{defn}
	 Let $(X,T)$ be a t.d.s. We say a set $U\subset X$ is $\varepsilon-$\textbf{stable in the mean
	}if
	\begin{equation}\label{felipe}
	\sup_{n\in\N} \{\frac{1}{N}\sum_{i=1}^{N}\diam(T^{i}U)\}<\varepsilon.
\end{equation}
	
\end{defn}
\begin{lem}
	Let $(X,T)$ be a t.d.s. 	Then $(X,T)$ is diam mean equicontinuous
	if and only if for each $\varepsilon>0$ there is $\delta>0$ such that
	for each $x\in X$, $B_{\delta }(x)$ is $\varepsilon$-stable in the mean.
\end{lem}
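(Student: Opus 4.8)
The plan is to prove the two directions separately, using the characterization of diam-mean equicontinuity in terms of pointwise behavior together with a compactness argument to pass from pointwise bounds to a uniform $\delta$.

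\textbf{The easy direction.} Suppose that for every $\varepsilon>0$ there is $\delta>0$ such that for each $x\in X$ the ball $B_\delta(x)$ is $\varepsilon$-stable in the mean, i.e.\ $\sup_{N\in\N}\frac1N\sum_{i=1}^N\diam(T^iB_\delta(x))<\varepsilon$. Since $\limsup_{N\to\infty}\frac1N\sum_{i=1}^N\diam(T^iB_\delta(x))\le\sup_{N\in\N}\frac1N\sum_{i=1}^N\diam(T^iB_\delta(x))<\varepsilon$, every $x$ is a diam-mean equicontinuity point, hence $(X,T)$ is diam-mean equicontinuous. This direction needs no compactness and no extra work.

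\textbf{The harder direction.} Suppose $(X,T)$ is diam-mean equicontinuous and fix $\varepsilon>0$. First I would reduce to producing, for each point $x$, a radius $\delta_x>0$ so that $B_{\delta_x}(x)$ is $(\varepsilon/2)$-stable in the mean (or some fixed fraction of $\varepsilon$): the diam-mean equicontinuity hypothesis only gives control of the $\limsup$, so the first step is to upgrade this to a genuine supremum over all $N$. The point is that for fixed $\delta$, the finitely many averages $\frac1N\sum_{i=1}^N\diam(T^iB_\delta(x))$ with $N$ below some threshold $N_0$ are each at most $N\cdot 1/N=1$ times a bounded quantity, but more usefully each individual term $\diam(T^iB_\delta(x))\to\diam(\{x\})=0$ as $\delta\to0$ for fixed $i$ by continuity of $T^i$; so by shrinking $\delta$ one makes all terms with $i\le N_0$ as small as we like, and by the $\limsup$ bound one controls the tail $N>N_0$. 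Thus for each $x$ there is $\delta_x>0$ with $\sup_N\frac1N\sum_{i=1}^N\diam(T^iB_{\delta_x}(x))<\varepsilon/2$.

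\textbf{Compactness and the main obstacle.} The final step is to pass from the pointwise radii $\delta_x$ to a single uniform $\delta$. The balls $\{B_{\delta_x/2}(x):x\in X\}$ cover $X$, so by compactness finitely many suffice, say centred at $x_1,\dots,x_k$ with radii $\delta_{x_1},\dots,\delta_{x_k}$; put $\delta=\frac12\min_j\delta_{x_j}$. For arbitrary $x\in X$, pick $j$ with $x\in B_{\delta_{x_j}/2}(x_j)$; then $B_\delta(x)\subseteq B_{\delta_{x_j}}(x_j)$, so $\diam(T^iB_\delta(x))\le\diam(T^iB_{\delta_{x_j}}(x_j))$ for every $i$, and hence $\sup_N\frac1N\sum_{i=1}^N\diam(T^iB_\delta(x))\le\sup_N\frac1N\sum_{i=1}^N\diam(T^iB_{\delta_{x_j}}(x_j))<\varepsilon/2<\varepsilon$, i.e.\ $B_\delta(x)$ is $\varepsilon$-stable in the mean. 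The main technical care is in the upgrade from $\limsup$ to $\sup$ in the harder direction — one must be careful that the threshold $N_0$ coming from the $\limsup$ bound may itself depend on $\delta$, so the right order of quantifiers is: first use $\limsup$ with some initial $\delta_0$ to get $N_0$, and then shrink $\delta\le\delta_0$ further to kill the finitely many initial averages $N\le N_0$ via continuity of $T,\dots,T^{N_0}$; this works because shrinking $\delta$ below $\delta_0$ only decreases all the relevant diameters, so the tail bound for $N>N_0$ is preserved.
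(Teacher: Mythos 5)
Your proof is correct, but it takes a genuinely different route from the paper. The paper proves the hard direction by contradiction with sequential compactness: assuming uniform $\varepsilon$-stability fails, it picks bad balls $B_{\delta_i}(x_i)$ with $\delta_i\to 0$ and times $N_i\to\infty$ along which the averages stay $\ge\varepsilon_0$, passes to a limit point $x_i\to x$, and notes that for large $i$ one has $B_{\delta_i}(x_i)\subset B_{\delta}(x)$ for the radius $\delta$ furnished by diam-mean equicontinuity at $x$, contradicting the $\limsup$ bound there. You instead argue directly: first an explicit pointwise upgrade from a $\limsup$ bound to a $\sup$ bound (using the $\limsup$ to control all $N>N_0$ and continuity of $T,\dots,T^{N_0}$ at $x$ to shrink $\delta$ and kill the finitely many initial averages), and then an open-cover/Lebesgue-number type compactness argument with the balls $B_{\delta_x/2}(x)$ to extract one uniform $\delta$. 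Your version is constructive and makes explicit two points the paper treats tacitly: why the counterexample times $N_i$ may be assumed to tend to infinity (this is exactly your continuity-of-$T^i$ observation), and how a pointwise $\delta_x$ becomes a uniform $\delta$ (the paper's displayed statement of diam-mean equicontinuity with a single $\delta$ for all $x$ is, strictly speaking, already of this uniform type, though its proof only uses the bound at the limit point). The paper's argument buys brevity; yours buys a self-contained, quantifier-transparent derivation. One cosmetic point: with your choice of constants the pointwise supremum is only guaranteed to be $\le\varepsilon/2$ rather than $<\varepsilon/2$ (a supremum of quantities each below $\varepsilon/2$ need not be strictly below it); this is harmless for the final strict bound $<\varepsilon$, or can be fixed by working with $\varepsilon/4$ in the pointwise step.
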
	
\begin{proof}	
	It is clear that the late condition implies diam mean equicontinuity. Now assume that $(X,T)$ is
	diam mean equicontinuous. For each $\ep>0$ there is $\delta>0$ such that
	for each $x\in X$
	
	\begin{equation}\label{e-ye}
	\limsup_{n\to\infty}\frac{1}{n}\sum_{i=0}^{n-1} \diam(T^{i}B_{\delta}(x))<\ep.
	\end{equation}
	
	Assume the contrary that (\ref{felipe}) does not hold. Then there are $x_i\in X$, $\delta_i\rightarrow 0$,
	$\ep_0>0$ and $N_i\rightarrow \infty$ such that
	\begin{equation*}\label{equivalence}
		\frac{1}{N_i}\sum_{j=1}^{N_i}\diam(T^{j}B_{\delta_i }(x_i))\ge \varepsilon_0,\ \forall i\in \N.
	\end{equation*}
	
	Without loss of generality assume that $x_i\rightarrow x$ and $\delta=\delta(\ep_0)$. When
	$i$ is large we have that $B_{\delta_i }(x_i))\subset B_{\delta}(x)$; a contradiction since (\ref{e-ye})
	holds.
\end{proof}	

%\textcolor{blue}{I added Lemma 4.5}

%Inspired by Katok's entropy formula, new forms of topological complexity were
% introduced in \cite{HLTY}. We show that diam-mean equicontinuity is also related to bounded complexity.
%\begin{defn}
%	We say that $(X,T)$ has \textbf{bounded D-mean topological complexity}  if for any%
%	$\varepsilon>0$ there exists $k(\varepsilon)\in\mathbb{N}$ and $\delta>0$ such
%	that for every $N\in\mathbb{N}$ there exists a finite cover, with size
%	$k(\varepsilon),$ of $\varepsilon-$stable in the N-mean $\delta-$balls.
%\end{defn}
%\begin{lem}
%A t.d.s. is diam-mean equicontinuous if and only if it has bounded D-mean
%topological complexity.	
%\end{lem}
%\begin{proof}
%Let $\varepsilon>0$. Assume that  $(X,T)$ has D-mean bounded topological
%complexity with constant $k(\varepsilon)$ and $\delta^{\prime}>0$. For every
%$N\in\mathbb{N}$ $\ $there exists $F_{N}\subset X$ with $\left\vert
%F_{N}\right\vert =k(\varepsilon)$ such that every $\delta^{\prime}-$ball
%centred in $x\in F_{N}$ is stable in the N-mean. There exists a subsquence
%$N_{n}$ and a set $F\subset X$ with $\left\vert F\right\vert
%=k(\varepsilon)$ such that $F_{N_{n}}\rightarrow F$ in the Hausdorff metric.
%Let $x\in X$ and $\delta:=(\delta^{\prime}-d(x,F))/2$. There exists
%$m\in\mathbb{N}$ such that for any $n\geq m$ there exists $y\in F_{N_{n}}$
%such that $B_{\delta}(x)\subseteq B_{\delta^{\prime}}(y)$. This implies that
%$B_{\delta}(x)$ is  $\varepsilon-$stable in the $N$-mean for every $N$ and
%hence it is a diam-mean equicontinuity point.

%\bigskipThe other direction is easy to prove.

%\end{proof}	
Another property of the diam-mean equicontinuity t.d.s. is the following.
\begin{lem}
\label{equival}Let $(X,T)$ be a t.d.s. Then $x\in X$ is a diam-mean
equicontinuity point if and only if for every $\eta>0$ there exists
$\delta>0$ such that
\begin{equation*}
\overline{D}\left\{  i\in\mathbb{Z}_{+}:\diam(T^{i}B_{\delta}(x))>\eta
\right\}  <\eta.
\end{equation*}

\end{lem}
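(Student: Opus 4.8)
The statement to prove is Lemma~\ref{equival}: a point $x$ is a diam-mean equicontinuity point if and only if for every $\eta>0$ there exists $\delta>0$ with $\overline D\{i\in\Z_+:\diam(T^iB_\delta(x))>\eta\}<\eta$. The plan is to prove the two implications separately, using the elementary fact that $\diam(T^iB_\delta(x))\le\diam(X)$ for all $i$, and exploiting the monotonicity of the quantities involved in $\delta$ (smaller $\delta$ gives smaller balls, hence smaller diameters and smaller Ces\`aro averages). I expect the reverse direction to be the one requiring a genuine (if short) argument; the forward direction is essentially a Markov-type inequality.

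For the forward direction, suppose $x$ is a diam-mean equicontinuity point and fix $\eta>0$. Choose $\delta>0$ so that $\limsup_{N\to\infty}\frac1N\sum_{i=1}^N\diam(T^iB_\delta(x))<\eta^2$. For each $N$, writing $S_N=\{i\in[1,N]:\diam(T^iB_\delta(x))>\eta\}$, the obvious estimate $\frac1N\sum_{i=1}^N\diam(T^iB_\delta(x))\ge\frac{\#S_N}{N}\cdot\eta$ gives $\frac{\#S_N}{N}\le\frac1\eta\cdot\frac1N\sum_{i=1}^N\diam(T^iB_\delta(x))$, so taking $\limsup$ yields $\overline D\{i:\diam(T^iB_\delta(x))>\eta\}\le\frac1\eta\cdot\eta^2=\eta$; in fact strictly less than $\eta$, which is what is claimed. (One can also simply observe that any $\delta'<\delta$ works and gives a strict inequality if needed.)

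For the reverse direction, suppose that for every $\eta>0$ there is $\delta>0$ with $\overline D\{i:\diam(T^iB_\delta(x))>\eta\}<\eta$; I must show $x$ is a diam-mean equicontinuity point, i.e.\ for every $\varepsilon>0$ there is $\delta>0$ with $\limsup_N\frac1N\sum_{i=1}^N\diam(T^iB_\delta(x))<\varepsilon$. Assume $\diam(X)=1$ (rescale the metric otherwise). Given $\varepsilon>0$, pick $\eta>0$ with $\eta+\eta\le\varepsilon$, say $\eta=\varepsilon/2$ (adjusting constants slightly so the final bound is strict), and take the corresponding $\delta$. For this $\delta$, split the Ces\`aro sum over $[1,N]$ according to whether $i$ lies in the ``bad set'' $B=\{i:\diam(T^iB_\delta(x))>\eta\}$ or not: on the bad indices bound each term by $\diam(X)=1$, on the good indices bound each term by $\eta$. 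This gives
\[
\frac1N\sum_{i=1}^N\diam(T^iB_\delta(x))\le \frac{\#(B\cap[1,N])}{N}\cdot 1+\eta.
\]
Taking $\limsup_{N\to\infty}$ and using $\overline D(B)<\eta$ yields $\limsup_N\frac1N\sum_{i=1}^N\diam(T^iB_\delta(x))\le \overline D(B)+\eta<\eta+\eta\le\varepsilon$, as desired.

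The main (minor) obstacle is bookkeeping the strict inequalities: the definition of diam-mean equicontinuity point uses strict ``$<\varepsilon$'' and the density condition uses strict ``$<\eta$'', so when passing through Markov's inequality and the splitting argument I need to leave a little slack — e.g.\ prove the density bound with $\eta^2$ in place of a bare bound, or choose $\eta$ slightly below $\varepsilon/2$ — so that the composed estimate is genuinely strict. None of this is deep; the only input beyond arithmetic is the trivial uniform bound $\diam(T^iB_\delta(x))\le\diam(X)<\infty$ by compactness, and the monotonicity of all relevant quantities in $\delta$, which lets me shrink $\delta$ freely.
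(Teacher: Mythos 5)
Your proof is correct and follows essentially the same route as the paper's: the forward direction is the Markov-type estimate (which the paper phrases contrapositively, as a proof by contradiction, with the same choice $\varepsilon=\eta^2$), and the reverse direction is the identical splitting of the Ces\`aro sum into indices where $\diam(T^iB_\delta(x))>\eta$ (bounded by $\diam(X)$) and the rest (bounded by $\eta$), with $\eta=\varepsilon/2$. Your bookkeeping of the strict inequalities is fine, so there is nothing to add.
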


\begin{proof}
We assume without loss of generality that the diameter of $X$ is bounded by $1.$

($\Rightarrow$) Let $x\in X$ be a diam-mean equicontinuity point. Assume that there exists
$\eta>0$ such that for every $\delta>0$ $\ $we have that
\[
\overline{D}\left\{  j\in\mathbb{Z}_{+}:\diam(T^{i}B_{\delta}(x))>\eta
\right\}  \geq\eta.
\]

Let $\varepsilon=\eta^{2}$.  Since $x$ is a diam-mean equicontinuity point we
may choose $\delta\in (0,\varepsilon)$ such that
\[
\lim\sup\frac{1}{n}\sum_{j=1}^{n}\diam(T^{i}B_{\delta}(x))<\varepsilon.
\]
At the same time, we have that
\begin{align*}
& \lim\sup\frac{1}{n}\sum_{j=1}^{n}\diam(T^{i}B_{\delta}(x))\\ &
  \geq\eta\overline{D}\left\{ j\in\mathbb{Z}_{+}:\diam(T^{i}%
  B_{\delta}(x))>\eta\right\} \\ & \geq\eta^{2} \ = \ \varepsilon,
\end{align*}
a contradiction.\medskip

($\Leftarrow$)
Now assume that for every $\eta>0$ there exists $\delta>0$ such that
\[
\overline{D}\left\{  i\in\mathbb{Z}_{+}:\diam(T^{i}B_{\delta}(x))>\eta
\right\}  <\eta.
\]
Given $\varepsilon>0$, let $\eta=\varepsilon/2$ and choose $\delta>0$ such that the previous
inequality holds. Then
\begin{align*}
&  \lim\sup\frac{1}{n}\sum_{j=1}^{n}\diam(T^{i}B_{\delta}(x))\\
&  \leq\overline{D}\left\{  j\in\mathbb{Z}:\diam(T^{i}B_{\delta}%
(x))>\eta\right\}  +\overline{D}\left\{  j\in\mathbb{Z}_{+}:\diam(T^{i}%
B_{\delta}(x))\leq\eta\right\}\cdot \eta\\
&  \leq2\eta \ = \ \varepsilon.
\end{align*}

Hence $x$ is a diam-mean equicontinuity point.
\end{proof}
In summary we have the following.
\begin{prop}
Let $(X,T)$ be a t.d.s. the following are equivalent:

\begin{enumerate}
	\item $(X,T)$ is diam-mean equicontinuous
	
	\item For every $\varepsilon>0$ there exists $\delta>0$ such that for every $x\in X$
	\[
	\sup_{n\in\mathbb{N}}\frac{1}{n}\sum_{j=1}^{n}\diam(T^{j}B_{\delta
	}(x))<\varepsilon.
	\]

%	\item $(X,T)$ has bounded D-mean topological complexity.
	
	\item For every $\varepsilon>0$ there exists $\delta>0$ such that for every $x\in X$
	\[
	\overline{D}\left\{  j\in\mathbb{Z}_{+}:\diam(T^{j}B_{\delta}(x))>\varepsilon
	\right\}  \leq\varepsilon.
	\]
	
\end{enumerate}

\end{prop}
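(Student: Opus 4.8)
The plan is to prove the cycle $(2)\Rightarrow(1)\Rightarrow(3)\Rightarrow(2)$, relying on the two preceding lemmas so that almost nothing new has to be done. The implication $(2)\Rightarrow(1)$ is immediate: if for every $\varepsilon>0$ there is a uniform $\delta>0$ with $\sup_n\frac1n\sum_{j=1}^n\diam(T^jB_\delta(x))<\varepsilon$ for all $x$, then in particular the $\limsup$ is $<\varepsilon$ for each fixed $x$, which is exactly the definition of diam-mean equicontinuity. (This is the easy direction of the first lemma of the subsection, but I would just restate it here rather than invoke it, to keep the proof self-contained.)

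For $(1)\Rightarrow(3)$ I would combine the first lemma of the subsection with Lemma~\ref{equival}. Fix $\varepsilon>0$. Since $(X,T)$ is diam-mean equicontinuous, the first lemma gives a $\delta>0$ such that for every $x\in X$ the ball $B_\delta(x)$ is $\varepsilon$-stable in the mean, i.e.\ $\sup_{n}\frac1n\sum_{j=1}^n\diam(T^jB_\delta(x))<\varepsilon$. Then for this same $\delta$ and any $x$, writing $S=\{j\in\Z_+:\diam(T^jB_\delta(x))>\varepsilon\}$, the elementary estimate $\frac1n\#(S\cap[1,n])\cdot\varepsilon\le\frac1n\sum_{j=1}^n\diam(T^jB_\delta(x))<\varepsilon$ (using $\diam\le 1$ is not even needed here) forces $\frac1n\#(S\cap[1,n])<1$ for all $n$; passing to the $\limsup$ gives $\overline D(S)\le 1$. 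That is not yet the bound $\le\varepsilon$, so instead I would argue as in the $(\Leftarrow)\!\!\Rightarrow$ style of Lemma~\ref{equival}: from $\sup_n\frac1n\sum_{j=1}^n\diam(T^jB_\delta(x))<\varepsilon$ one directly gets $\varepsilon>\overline D\{j:\diam(T^jB_\delta(x))>\varepsilon\}\cdot\varepsilon+0$, hence $\overline D\{j:\diam(T^jB_\delta(x))>\varepsilon\}<1$; to upgrade to $\le\varepsilon$ one applies the same reasoning with threshold $\varepsilon^2$ as in Lemma~\ref{equival}, i.e.\ pick $\delta$ for the parameter $\varepsilon^2$, so that $\varepsilon^2>\sup_n\frac1n\sum\diam(T^jB_\delta(x))\ge\varepsilon\cdot\overline D\{j:\diam(T^jB_\delta(x))>\varepsilon\}$, yielding $\overline D\{\dots\}<\varepsilon\le\varepsilon$. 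This is the only slightly fiddly point, and it is exactly the trick already used in Lemma~\ref{equival}, so I would simply cite that lemma pointwise in $x$ together with the uniformity coming from the first lemma.

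For $(3)\Rightarrow(2)$ I would again split the argument into ``pointwise'' plus ``uniformity''. Pointwise, Lemma~\ref{equival} says that the density condition in (3) (for all $\eta>0$, with the same $\delta$ working for the chosen $x$) is equivalent to $x$ being a diam-mean equicontinuity point; but (3) gives this with a $\delta$ that is \emph{uniform} over $x\in X$ for each fixed $\varepsilon$. So (3) implies that $(X,T)$ is diam-mean equicontinuous, and moreover with a uniform modulus; feeding this back into the first lemma of the subsection (whose nontrivial direction is precisely ``diam-mean equicontinuous $\Rightarrow$ uniform mean-stability of small balls'') produces exactly statement (2). Concretely: given $\varepsilon>0$, apply (3) with $\eta=\varepsilon/2$ to obtain $\delta>0$ with $\overline D\{j:\diam(T^jB_\delta(x))>\varepsilon/2\}<\varepsilon/2$ for every $x$; then $\frac1n\sum_{j=1}^n\diam(T^jB_\delta(x))\le\varepsilon/2+\frac1n\#\{j\le n:\diam(T^jB_\delta(x))>\varepsilon/2\}$, and since the asymptotic density of the bad set is $<\varepsilon/2$ uniformly one would like $\sup_n$ of the average to be $<\varepsilon$. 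The genuine obstacle is that $\overline D(\cdot)<\varepsilon/2$ controls the $\limsup$ of the finite averages but not every finite average, so to get the $\sup_n$ in (2) one must invoke the compactness/limit argument of the first lemma (taking a convergent subsequence $x_i\to x$, $\delta_i\to$ a fixed $\delta(\varepsilon)$, and using $B_{\delta_i}(x_i)\subset B_{\delta}(x)$ eventually to derive a contradiction with the $\limsup$ bound) rather than a naive per-$n$ estimate. This transition from ``$\limsup$'' to ``$\sup_n$'' is the main point to be careful about, and it is already handled by the earlier lemma, so I would phrase $(3)\Rightarrow(2)$ as: (3) $\Rightarrow$ diam-mean equicontinuity (by Lemma~\ref{equival}) $\Rightarrow$ (2) (by the first lemma of the subsection), closing the cycle.
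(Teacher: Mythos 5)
Your proposal is correct and follows essentially the same route as the paper, which presents this proposition merely as a summary of the two preceding lemmas: the first lemma gives $(1)\Leftrightarrow(2)$, and the Chebyshev-type estimate with the $\varepsilon^2$ threshold from Lemma~\ref{equival} handles the density condition $(3)$. Your extra care about the $\sup_n$ versus $\limsup$ issue and about the pointwise-versus-uniform $\delta$ when invoking Lemma~\ref{equival} is exactly the right way to assemble the cited lemmas, so there is nothing to correct.
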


\subsection{Banach diam-mean equicontinuity and regularity}
Banach mean equicontinuity was introduced in \cite{LTY} and has been studied in \cite{QZ,DG,FGL} (on the last two papers under the name Weyl equicontinuity). In this paper we introduce the diam version.
\begin{defn}
\label{banachweak}Let $(X,T)$ be a t.d.s. We say $x\in X$ is a \textbf{Banach
diam-mean equicontinuity point} if for every $\varepsilon>0$ there exists
$\delta>0$ such that%
\[
\lim\sup_{N-M\rightarrow\infty}\frac{1}{N-M}\sum_{i=M+1}^{N}\diam(T^{i}%
B_{\delta}(x))<\varepsilon.
\]
We say $(X,T)$ is \textbf{Banach diam-mean equicontinuous }if every $x\in X$
is a diam-mean equicontinuity point. We say $(X,T)$ is \textbf{almost Banach
diam-mean equicontinuous }if the set of Banach diam-mean equicontinuity points
is residual.
\end{defn}
A t.d.s. is Banach diam-mean equicontinuous if and only
if for every $\varepsilon>0$ and $x\in X$ there exists $\delta>0$ such that
$\lim\sup_{N-M\rightarrow\infty}\frac{1}{N-M}\sum_{i=M+1}^{N}\diam(T^{i}%
B_{\delta}(x))<\varepsilon$.
A transitive t.d.s. is almost Banach diam-mean equicontinuous if and only if there exists a
Banach diam-mean equicontinuity point and a minimal almost Banach diam-mean equicontinuous system is always Banach diam-mean equicontinuous.

Every Banach diam-mean equicontinuity point is a diam-mean equicontinuity point but the converse
does not hold (see Section 5).
The proof of the following lemma is similar to the proof of Lemma
\ref{equival}.

\begin{lem}
Let $(X,T)$ t.d.s. Then $x\in X$ is a Banach diam-mean equicontinuity point if
and only if for every $\varepsilon>0$ there exists $\delta>0$ such that
\[
BD^{\ast}(B)\left\{  i\in\mathbb{Z}_{+}:\diam(T^{i}B_{\delta}(x))>\varepsilon
\right\}  <\varepsilon.
\]

\end{lem}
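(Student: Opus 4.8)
The plan is to copy, almost verbatim, the proof of Lemma~\ref{equival}, replacing the upper density $\overline{D}$ by the upper Banach density $BD^{\ast}$ and the Cesàro averages over initial segments $[1,n]$ by averages over windows $[M+1,N]$ with $N-M\to\infty$. As there, we may assume without loss of generality that $\diam(X)\le 1$, so that on any ``bad'' set of indices the quantity $\diam(T^iB_\delta(x))$ is automatically bounded by $1$.

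For the forward implication, suppose $x$ is a Banach diam-mean equicontinuity point and, for contradiction, that there is $\eta>0$ such that $BD^{\ast}\{i\in\Z_+:\diam(T^iB_\delta(x))>\eta\}\ge\eta$ for every $\delta>0$. Put $\varepsilon=\eta^2$ and use the hypothesis to choose $\delta>0$ with $\limsup_{N-M\to\infty}\frac1{N-M}\sum_{i=M+1}^{N}\diam(T^iB_\delta(x))<\varepsilon$. By definition of $BD^{\ast}$ there are windows $[M_k+1,N_k]$ with $N_k-M_k\to\infty$ on which the proportion of indices $i$ with $\diam(T^iB_\delta(x))>\eta$ is bounded below by a quantity tending to a value $\ge\eta$. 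Restricting the sum to those indices gives
\[
\frac1{N_k-M_k}\sum_{i=M_k+1}^{N_k}\diam(T^iB_\delta(x))\ \ge\ \eta\cdot\frac{\#\{M_k<i\le N_k:\diam(T^iB_\delta(x))>\eta\}}{N_k-M_k}\ \xrightarrow[k\to\infty]{}\ \eta\cdot\eta=\varepsilon,
\]
contradicting the choice of $\delta$. (The slight discrepancy between the window $[M+1,N]$ appearing in the average and the window $[M,N-1]$ appearing in the definition of $BD^{\ast}$ alters cardinalities by at most one and is irrelevant in the limit.)

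For the reverse implication, given $\varepsilon>0$ set $\eta=\varepsilon/2$ and choose $\delta>0$ with $BD^{\ast}\{i:\diam(T^iB_\delta(x))>\eta\}<\eta$. For every window $[M+1,N]$, splitting the sum according to whether $\diam(T^iB_\delta(x))$ exceeds $\eta$ and using $\diam(T^iB_\delta(x))\le\diam(X)\le1$ on the exceptional set yields
\[
\frac1{N-M}\sum_{i=M+1}^{N}\diam(T^iB_\delta(x))\ \le\ \frac{\#\{M<i\le N:\diam(T^iB_\delta(x))>\eta\}}{N-M}+\eta,
\]
and taking $\limsup_{N-M\to\infty}$ gives a bound of $BD^{\ast}\{\cdots\}+\eta<2\eta=\varepsilon$, so $x$ is a Banach diam-mean equicontinuity point. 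The only point requiring any care is the bookkeeping between the two slightly different window conventions together with the fact that $BD^{\ast}$ is a $\limsup$ (rather than a limit) over windows; both are handled exactly as indicated above, so there is no real obstacle.
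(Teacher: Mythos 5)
Your proof is correct, and it is exactly what the paper intends: the paper gives no separate argument, stating only that the proof is ``similar to the proof of Lemma~\ref{equival}'', which is precisely the transcription you carry out (replacing $\overline{D}$ by $BD^{\ast}$ and initial-segment averages by window averages, with the off-by-one window convention correctly dismissed as harmless in the limit).
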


%\newpage

\begin{defn}
	We say $\pi_{eq}$ is \textbf{regular (or almost surely 1-1)} if
	 \[
	  \nu_{eq} (\left\{ y\in X_{eq}:\left\vert \pi_{eq} ^{-1}(y)\right\vert =1\right\} )=1.
	  \]
\end{defn}
The next result can be considered a measurable version of (i) implies (ii) in Theorem~\ref{t.frequent_stability}.

	\begin{prop}
		\label{regular-2} Let $(X,T)$ be a minimal t.d.s. and suppose $\pi
		:X\rightarrow X_{eq}$ is regular. Then $(X,T)$ is Banach diam-mean
		equicontinuous.
		%a weak equicontinuous minimal t.d.s. If $(X,T)$ is almost 1-1, then
		%$(X,T)$ is regular.
		
	\end{prop}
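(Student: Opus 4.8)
The plan is to run a measure-theoretic version of the argument for ``(i)$\Rightarrow$(ii)'' in Theorem~\ref{t.frequent_stability}: the single point with trivial fibre there is replaced by a compact subset of $Y_{0}:=\{y\in X_{eq}:|\pi_{eq}^{-1}(y)|=1\}$ of $\nu_{eq}$-measure close to $1$, and the lower-density estimate coming from strict ergodicity is replaced by the corresponding \emph{Banach}-density estimate valid for uniquely ergodic systems. As in that proof I will assume without loss of generality that $\diam(X)\le 1$ and that the metric on $X_{eq}$ is chosen so that $T_{eq}$ is an isometry (legitimate, since $(X_{eq},T_{eq})$ is minimal equicontinuous), so that $T_{eq}^{i}B_{\eta}(y)=B_{\eta}(T_{eq}^{i}y)$ and $\pi_{eq}(T^{i}A)=T_{eq}^{i}\pi_{eq}(A)$ for all $i\in\mathbb{Z}_{+}$.

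Fix $\varepsilon>0$ and $x\in X$; the goal is to find $\delta>0$ with $\limsup_{N-M\to\infty}\frac{1}{N-M}\sum_{i=M+1}^{N}\diam(T^{i}B_{\delta}(x))<\varepsilon$. The map $y\mapsto\diam(\pi_{eq}^{-1}(y))$ is upper semicontinuous, so $Y_{0}=\{y:\diam(\pi_{eq}^{-1}(y))=0\}$ is Borel, and regularity of $\pi_{eq}$ gives $\nu_{eq}(Y_{0})=1$; choose a compact $K\subseteq Y_{0}$ with $\nu_{eq}(K)>1-\varepsilon$. Next I claim there is a single $\eta>0$ with $\diam(\pi_{eq}^{-1}(\overline{B_{2\eta}(y)}))<\varepsilon$ for every $y\in K$: if not, for $\eta_{n}=1/n$ one finds $y_{n}\in K$ and points $a_{n},b_{n}\in\pi_{eq}^{-1}(\overline{B_{2/n}(y_{n})})$ with $d(a_{n},b_{n})\ge\varepsilon$, and convergent subsequences $y_{n}\to y\in K$, $a_{n}\to a$, $b_{n}\to b$ force $\pi_{eq}(a)=\pi_{eq}(b)=y\in Y_{0}$ while $d(a,b)\ge\varepsilon$, a contradiction. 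Now set $W:=\bigcup_{y\in K}B_{\eta}(y)$. Then $W$ is open with $\nu_{eq}(W)\ge\nu_{eq}(K)>1-\varepsilon$, and since $d(y',y)<\eta$ implies $\overline{B_{\eta}(y')}\subseteq\overline{B_{2\eta}(y)}$, we obtain $\diam(\pi_{eq}^{-1}(\overline{B_{\eta}(y')}))<\varepsilon$ for \emph{every} $y'\in W$.

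Since $X_{eq}\setminus W$ is closed with $\nu_{eq}(X_{eq}\setminus W)<\varepsilon$, unique ergodicity of $(X_{eq},T_{eq})$ (uniform convergence of ergodic averages over windows, applied to a continuous function dominating the indicator of $X_{eq}\setminus W$) shows that the set $S:=\{i\in\mathbb{Z}_{+}:T_{eq}^{i}(\pi_{eq}(x))\notin W\}$ satisfies $BD^{\ast}(S)\le\nu_{eq}(X_{eq}\setminus W)<\varepsilon$. Choose $\delta>0$ with $\pi_{eq}(B_{\delta}(x))\subseteq B_{\eta}(\pi_{eq}(x))$. For every $i$ the factor relation and the isometry property give $\pi_{eq}(T^{i}B_{\delta}(x))\subseteq B_{\eta}(T_{eq}^{i}\pi_{eq}(x))$, hence $T^{i}B_{\delta}(x)\subseteq\pi_{eq}^{-1}(\overline{B_{\eta}(T_{eq}^{i}\pi_{eq}(x))})$; so $\diam(T^{i}B_{\delta}(x))<\varepsilon$ whenever $i\notin S$, while $\diam(T^{i}B_{\delta}(x))\le 1$ always. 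Averaging over a window $(M,N]$ and letting $N-M\to\infty$ yields $\limsup_{N-M\to\infty}\frac{1}{N-M}\sum_{i=M+1}^{N}\diam(T^{i}B_{\delta}(x))\le\varepsilon+BD^{\ast}(S)<2\varepsilon$. As $\varepsilon>0$ and $x\in X$ were arbitrary, $(X,T)$ is Banach diam-mean equicontinuous.

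The main obstacle is reconciling two conflicting requirements. The uniform fibre-diameter bound is only available on a \emph{compact} subset of $Y_{0}$ (the radius at which the fibre of $\pi_{eq}^{-1}$ becomes thin depends on the base point, and this is only repairable by a compactness/continuity argument on a compact set). On the other hand, the Banach-density estimate for the return times needs the ``good'' base set to be \emph{open}, so that its complement is closed and the uniquely ergodic averaging theorem delivers the inequality in the correct direction (for open sets it would go the wrong way). The device that overcomes this is to pass to a compact $K\subseteq Y_{0}$, establish the fibre-diameter control with radius $2\eta$ on $K$, and then inflate $K$ to the open $\eta$-neighbourhood $W$, retaining the fibre-diameter control on all of $W$. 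The remaining ingredients — upper semicontinuity of the fibre-diameter function, the windowed uniform ergodic theorem, and the reduction to $T_{eq}$ an isometry — are standard and already used, at least implicitly, in the proof of Theorem~\ref{t.frequent_stability}.
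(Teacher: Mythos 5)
Your proof is correct and follows essentially the same strategy as the paper's: pass to a compact subset of the full-measure set of singleton fibres, obtain a uniform radius $\eta$ of fibre-diameter control by compactness (the paper packages this as per-point radii plus a finite subcover, you as a contradiction argument), inflate to an open set so that unique ergodicity gives a uniform windowed (Banach) bound on the bad times, and conclude via the isometric model of $T_{eq}$. The only cosmetic difference is that the paper extracts a $\delta$ depending only on $\varepsilon$ (via uniform continuity of $\pi_{eq}$) whereas you fix $x$ first, which is equally sufficient for the definition.
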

	
	\begin{proof}
		Without loss of generality we may assume $T_{eq}$ is an isometry.
		Let
		\[
		F=\{y\in X_{eq}:\pi_{eq}^{-1}(y)=\{x_{y}\}\}.
		\]
		Since $\pi_{eq}$ is regular we have that $\nu_{eq}(F)=1$.
		Let $\varepsilon>0$.
		There is a compact $F'\subset F$ such that $\nu_{eq  }(F')>1-\varepsilon$.
		
		For any $y\in F'$ there is $\delta_{y}>0$ such that $\pi_{eq}
                ^{-1}(B_{2\delta_{y}}(y))\subset B_{\varepsilon}(x_{y})$.
                Consider the open cover $\{B_{\delta_{y}}(y):y\in F'\}$ of
                $F'$. Since $F'$ is compact there are $n\in\mathbb{N}$ and
                $y_{1},\ldots,y_{n}\in F'$ such that
                $\cup_{i=1}^{n}B_{\delta_{y_{i}}}(y_{i})\supset F'$. Let
		
		$$\eta=\min\{\delta_{y_{i}}:1\le i\le n\}\ \text{and}
                \ G_\varepsilon=\cup_{i=1}^{n}B_{\delta_{y_{i}}}(y_{i}).$$
		
		It is clear that $G_\varepsilon$ is open and
                $\nu_{eq}(G_\varepsilon)>1-\varepsilon$. Moreover, for all $y\in
                G_\varepsilon$, there is $y_i$ such that $y\in
                B_{\delta_{y_{i}}}(y_{i})$. This implies that $B_\eta(y)\subset
                B_{\delta_{y_{i}}}(y) \subset B_{2\delta_{y_{i}}}(y_{i})$.  By
                uniform ergodicity, there exists $L>0$ such that
		\[
		\frac{1}{K}\sum_{i=j}^{K+j-1} 1_{G_\varepsilon}\circ T_{eq}^i(y) \geq 1-\varepsilon
		\]
		for all $j\in\N$, $K\geq L$ and $y\in X_{eq}$.\\ Now, choose
                $\delta>0$ such that if $U\subset X$ and $\diam(U)<\delta$ then
                $\diam(\pi_{eq}(U))<\eta/2$. Let $U$ be a non-empty open subset
                of $X$ with $\diam(U)<\delta$. Since $T_{eq}$ is an isometry we
                have that $\diam(T_{eq}^{i}\pi(U))<\eta/2$ for every
                $i\in\N$. Let $y\in\pi_{eq}(U)$. For every $i\in\N$ such that
                $T_{eq}^{i}y\in G_{\varepsilon}$ we have that
                $T_{eq}^{i}\pi(U)\subset B_{\eta}(T_{eq}^{i}y)\subset
                B_{2\delta_{y_{l}}}(y_{l})$ for some $1\leq l\leq n$. Thus,
		\[
		T^{i}(U)\subset\pi_{eq}^{-1}\pi_{eq}(T^{i}U)=\pi_{eq}^{-1}T_{eq}^{i}\pi_{eq}(U)\subset\pi_{eq}^{-1}%
                B_{2\delta_{y_{l}}}(y_{l})\subset B_{\varepsilon}(x_{y_{l}}).
		\]
		
		We conclude that when $K\ge L$
		\begin{align*}
		\frac{1}{K}\sum_{i=j}^{K+j-1}\diam(T^{i}U)&=\frac{1}{K}\sum_{\substack{i=j\\T_{eq}^{i}y\in
                    G_{\varepsilon}}}^{K+j-1}\diam(T^{i}U)
                +\frac{1}{K}\sum_{\substack{i=j\\T_{eq}^{i}y\not\in
                  G_{\varepsilon}}}^{K+j-1}\diam(T^{i}U) \\ &\le
                2\varepsilon+\diam(X) \left(\frac{1}{K}\sum_{i=j}^{K+j-1}
                1_{X\setminus G_\varepsilon}\circ T_{eq}^i(y)\right)\\ & <
                2(1+\diam(X))\varepsilon,
		\end{align*}
		finishing the proof.
		 	\end{proof}

In \cite{G} it was shown that every minimal null system is diam-mean
equicontinuous. We can now obtain a stronger result using Proposition
\ref{regular-2} and the recent result that the m.e.f. $\pi_{eq}$, of every
minimal tame t.d.s. is regular \cite{FGJO}.

\begin{cor}
Every minimal tame t.d.s. is Banach diam-mean equicontinuous.
\end{cor}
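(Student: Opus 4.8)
The plan is simply to chain together two facts that are already at our disposal, so that no new argument is needed. Recall that the introduction quotes the theorem of Fuhrmann, Glasner, J\"ager and Oertel \cite{FGJO}, which asserts that the maximal equicontinuous factor map $\pi_{eq}:(X,T)\to(X_{eq},T_{eq})$ of \emph{every} minimal tame t.d.s.\ is regular, i.e.\ $\nu_{eq}(\{y\in X_{eq}:|\pi_{eq}^{-1}(y)|=1\})=1$. On the other hand, Proposition~\ref{regular-2} states that whenever $(X,T)$ is minimal and $\pi_{eq}$ is regular, the system $(X,T)$ is Banach diam-mean equicontinuous. Composing these two implications yields the corollary at once.

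In more detail: given a minimal tame $(X,T)$, first apply \cite{FGJO} to obtain regularity of $\pi_{eq}$ in the sense of the definition preceding Proposition~\ref{regular-2}; then feed this into Proposition~\ref{regular-2}, whose two hypotheses --- minimality of $(X,T)$ and regularity of $\pi_{eq}$ --- are now verified, to conclude that $(X,T)$ is Banach diam-mean equicontinuous. Since every minimal null system is tame \cite{KL05}, this simultaneously recovers, and upgrades to the Banach version, the statement from \cite{G} that minimal null systems are diam-mean equicontinuous.

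There is no genuine obstacle at this point. All the analytic content sits inside Proposition~\ref{regular-2} --- an averaging estimate over the ``good'' open set $G_\varepsilon\subset X_{eq}$ with $\nu_{eq}(G_\varepsilon)>1-\varepsilon$, using unique ergodicity of $(X_{eq},T_{eq})$ to control the density of returns to $G_\varepsilon$ uniformly in the starting index --- while the structural input, namely regularity of $\pi_{eq}$ for minimal tame systems, is imported wholesale from \cite{FGJO}. The only thing worth checking is that the two statements dovetail, i.e.\ that the notion of regularity established in \cite{FGJO} coincides with the one used in Proposition~\ref{regular-2}, which it does.
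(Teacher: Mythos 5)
Your proposal is correct and matches the paper's own argument exactly: the corollary is obtained by combining the regularity of $\pi_{eq}$ for minimal tame systems from \cite{FGJO} with Proposition~\ref{regular-2}. Nothing further is needed.
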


\begin{prop}
\label{regular-1} Let $(X,T)$ be a diam-mean equicontinuous minimal t.d.s.
Then $\pi_{eq}$ is regular.
\end{prop}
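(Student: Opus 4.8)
The plan is to argue by contradiction: assume $(X,T)$ is minimal and diam-mean equicontinuous, but $\pi_{eq}$ is not regular, and derive a violation of diam-mean equicontinuity. Since $(X,T)$ is minimal and mean equicontinuous (diam-mean equicontinuity implies mean equicontinuity), Lemma \ref{LTY-G} together with \cite{LTY,DG} gives that $\pi_{eq}$ is a measure-theoretic isomorphism between $(X,\mu,T)$ and $(X_{eq},\nu_{eq},T_{eq})$; in particular there are full-measure sets $X_0\subset X$, $Y_0\subset X_{eq}$ with $\pi_{eq}\colon X_0\to Y_0$ a bijection, and I write $\rho\colon Y_0\to X_0$ for the inverse. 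If $\pi_{eq}$ is not regular, then $E=\{y\in X_{eq}:|\pi_{eq}^{-1}(y)|>1\}$ has $\nu_{eq}(E)>0$. The key observation is that on $E$ one can, as in the proof of Theorem \ref{t.frequent_stability} (the implication (iii)$\Rightarrow$(i)), find a positive-measure subset $E'\subset E$ and an $\varepsilon_0>0$ on which the fibres are ``uniformly non-degenerate'': using the claim-type compactness argument there, one gets $\kappa>0$ such that for each $y\in E'$ there are $A_1(y),A_2(y)\subset B_\eta(y)$ with $\nu_{eq}(A_j(y))>\kappa$ and $d(\rho(A_1(y)),\rho(A_2(y)))>\varepsilon_0/4$. (Here one first shrinks to a compact positive-measure set where some fixed pair $x_1,x_2\in\pi_{eq}^{-1}(y)$ with $d(x_1,x_2)>\varepsilon_0/2$ can be chosen measurably, then applies the claim.)

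Next I fix an arbitrary $x\in X$ and $\delta>0$ and show the diam-mean equicontinuity inequality fails at scale $\varepsilon_0/4$. By Lemma \ref{known} there are $\eta>0$ and $y\in\pi_{eq}(B_\delta(x))$ with $B_\eta(y)\subset\pi_{eq}(B_\delta(x))$, and by the Jankov--von Neumann selection theorem there is a measurable $\varphi\colon B_\eta(y)\to B_\delta(x)$ with $\pi_{eq}\circ\varphi=\mathrm{id}$. Lemma \ref{LTY-G} gives $\frac1n\sum_{i=0}^{n}d(T^i\varphi(y'),T^i\rho(y'))\to 0$ pointwise on $B_\eta(y)\cap Y_0$, so dominated convergence and a Chebyshev/Fubini argument yield that for a density-one set of times $i$, the measure of $\{y'\in B_\eta(y)\cap Y_0: d(T^i\varphi(y'),T^i\rho(y'))>\varepsilon_0/8\}$ is smaller than $\kappa/2$. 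But whenever $T_{eq}^i y\in E'$ (which happens with positive upper density by strict ergodicity, since $\nu_{eq}(E')>0$, after shrinking $\eta$ and using that $T_{eq}$ may be assumed an isometry so that $T_{eq}^i B_\eta(y)=B_\eta(T_{eq}^i y)$), the sets $A_1(T_{eq}^i y),A_2(T_{eq}^i y)$ each have $\nu_{eq}$-measure $>\kappa$, hence each meets the ``$\varepsilon_0/8$-close'' set; picking representatives there and using $d(\rho(A_1),\rho(A_2))>\varepsilon_0/4$ together with the triangle inequality, one finds $y_1(i),y_2(i)\in B_\eta(y)$ with $d(T^i\varphi(y_1(i)),T^i\varphi(y_2(i)))\ge \varepsilon_0/4 - 2\cdot\varepsilon_0/8$ — so one should actually run the estimate at $\varepsilon_0/4$ for the $\rho$-distance and absorb the two $\varepsilon_0/8$ errors, ending with a genuine lower bound of the form $\varepsilon_0/8$ (or rescale constants at the start). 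Since $\varphi(y_j(i))\in B_\delta(x)$, this forces $\diam(T^i B_\delta(x))\ge \varepsilon_0/8$ for a set of $i$ of positive upper density (bounded below independently of $\delta$ and $x$), contradicting Lemma \ref{equival} and hence diam-mean equicontinuity.

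The main obstacle, and the place requiring genuine care, is the ``uniform non-degeneracy'' step: extracting a single $\varepsilon_0>0$ and a positive-measure set on which fibres are uniformly spread out, with a measurable choice of the pair of points and of the sets $A_1,A_2$. This is exactly the content of the claim and statement ($*$) in the proof of Theorem \ref{t.frequent_stability}, so I would cite that argument verbatim; the only new ingredient here is that we only have $\nu_{eq}(E)>0$ rather than $E=X_{eq}$, so the Lemma \ref{lemmadb} dichotomy is not directly available on all of $X_{eq}$ and one must instead pass to a positive-measure subset on which the fibre diameter exceeds some fixed $\varepsilon_0$ (decompose $E$ according to $\mathrm{diam}(\pi_{eq}^{-1}(y))>1/k$). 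A secondary technical point is making sure the positive-density set of good times $\{i:T_{eq}^i y\in E'\}$ survives intersection with the density-one set of times where the $L^1$-estimate is good — that intersection still has positive upper density, which suffices to contradict Lemma \ref{equival}. Everything else (the selector $\varphi$, dominated convergence, the isometry normalization) is routine and parallels the already-given proof of Theorem \ref{t.frequent_stability}.
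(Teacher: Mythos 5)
Your overall strategy is viable, but it is a genuinely different and much heavier route than the paper's. The paper argues softly: since diam-mean equicontinuity implies mean equicontinuity and frequent stability, Theorem \ref{maininsection3} already gives that $\pi_{eq}$ is almost 1-1; if $\pi_{eq}$ is not regular, ergodicity forces $\nu_{eq}\{y:\diam(\pi_{eq}^{-1}(y))>\varepsilon\}=\delta>0$ for some $\varepsilon>0$, and one fixes a single Birkhoff-generic point $y$ in this set. Then for any nonempty open $U\subset X$, Lemma \ref{known} together with almost 1-1-ness produces a singleton fibre inside $U$, and minimality plus upper semicontinuity of fibres gives $m$ with $T^{m}(\pi_{eq}^{-1}(y))\subset U$; since fibres map onto fibres, $\diam(T^{n}U)\geq\diam\bigl(\pi_{eq}^{-1}(T_{eq}^{n+m}y)\bigr)>\varepsilon$ whenever $T_{eq}^{n+m}y$ lies in the fat-fibre set, which happens with frequency $\delta$ independent of $U$ -- contradiction, with no selectors, no dominated convergence and no analogue of the claim $(*)$. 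Your proof instead localizes the (iii)$\Rightarrow$(i) machinery of Theorem \ref{t.frequent_stability} to a positive-measure set of fat fibres; this can be made to work (and you rightly note that the constants $\varepsilon_0/4-2\cdot\varepsilon_0/8$ must be rescaled, a point the paper's own write-up of Theorem \ref{t.frequent_stability} glosses over), but it buys nothing here except independence from the almost 1-1 step, at the cost of redoing all the measure-isomorphism and selection bookkeeping.

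There is one step that, as you state it, is false and needs exactly the device the paper uses. You claim that $\{i:T_{eq}^{i}y\in E'\}$ has positive upper density ``by strict ergodicity, since $\nu_{eq}(E')>0$''. Unique ergodicity controls visit frequencies from below only for open sets; for a merely measurable (or compact) positive-measure $E'$ and the particular $y$ handed to you by Lemma \ref{known}, the orbit of $y$ may miss $E'$ entirely (remove the countable orbit of $y$ from $E'$: the measure is unchanged but the visit density is $0$). The repair is short: $B_{\eta}(y)$ is open, hence of positive $\nu_{eq}$-measure by minimality, hence contains a point $y^{*}$ that is Birkhoff-generic for $1_{E'}$ (with $E'$ chosen inside $Y_{0}$); replace $y$ by $y^{*}$ and shrink $\eta$ so that $B_{\eta'}(y^{*})\subset\pi_{eq}(B_{\delta}(x))$ still holds. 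Then the good times have lower density at least $\nu_{eq}(E')$, this intersects your density-one set of times where the $L^{1}$-tracking estimate holds in a set of positive upper density bounded below independently of $x$ and $\delta$, and the contradiction with Lemma \ref{equival} goes through as you intend.
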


\begin{proof}
We have that $(X,T)$ is mean equicontinuous and frequently stable hence $\pi_{eq}:(X,T)\rightarrow(X_{eq},T_{eq})$ is almost 1-1.
Assume that $\pi_{eq}$ is not regular.

Then
\[
\nu_{eq}(\{y\in X_{eq}:\diam(\pi_{eq}^{-1}(y))>0\})=1.
\]
This implies that there is $\varepsilon>0$ such that $\delta:=\nu_{eq}
(A_{\varepsilon})>0$, where
\[
A_{\varepsilon}=\{y\in X_{eq}:\diam(\pi_{eq}^{-1}(y))>\varepsilon\}.
\]

By Birkhoff's Ergodic Theorem there is $y\in A_{\varepsilon}$ such that
\[
\frac{1}{N}\big|\{1\leq i\leq N:T_{eq}^{i}y\in A_{\varepsilon}\}\big|=\frac{1}%
{N}\sum_{i=1}^{N}1_{A_{\varepsilon}}(T_{eq}^{i}y)\rightarrow\nu_{eq}(A_{\varepsilon})>0.
\]

Now let $U$ be an non-empty open subset of $X$. Since $\pi_{eq}(U)$ contains an open
non-empty subset of $X_{eq}$, we know that there is $y_{0}\in X_{eq}$ such that
$\pi_{eq}^{-1}(y_{0})\subset U$ is a singleton, since $\pi_{eq}$ is almost 1-1. By
minimality, there is a sequence $(n_{j})_{j\in\N}$ such that
$T_{eq}^{n_{j}}y\rightarrow y_{0}$. This implies that there is $m=m(U)$
such that $T^{m}(\pi_{eq}^{-1}(y))\subset U$. It follows that
\[
\lim_{N\to\infty} \frac{1}{N}\left|\{1\leq n\leq
N:\diam(T^{n}U)>\varepsilon\}\right| \geq \lim_{N\to\infty} \frac{1}{N}\sum_{i=1}^{N}
1_{A_\varepsilon}(T_{eq}^i(T^{m}_{eq}y)) = \nu_{eq}(A_\varepsilon) = \delta,
\]
contradicting the diam-mean equicontinuity of $(X,T)$ since $\delta$ is
independent of $U$.
\end{proof}
Using much stronger hypothesis, the previous result was obtained in Theorem 54 of \cite{G}.

%\newpage
Combining the above two propositions we have the following main result of the section.

\begin{thm}\label{mainresult4}
Let $(X,T)$ be a minimal t.d.s. The following statements are equivalent:

\begin{enumerate}
\item $(X,T)$ is diam-mean equicontinuous.

\item $(X,T)$ is Banach diam-mean equicontinuous.

\item $\pi_{eq}:X\rightarrow X_{eq}$ is regular.

\end{enumerate}
\end{thm}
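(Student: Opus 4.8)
The plan is to prove Theorem~\ref{mainresult4} by combining the two propositions just established and closing a small circle of implications. Concretely, I would argue that $(2)\Rightarrow(1)$ is immediate (a Banach diam-mean equicontinuity point is a diam-mean equicontinuity point, as remarked before Lemma~\ref{equival}'s Banach analogue), that $(1)\Rightarrow(3)$ is exactly Proposition~\ref{regular-1}, and that $(3)\Rightarrow(2)$ is exactly Proposition~\ref{regular-2}. This closes the cycle $(1)\Rightarrow(3)\Rightarrow(2)\Rightarrow(1)$, so all three statements are equivalent. Since both nontrivial implications are already isolated as propositions, the theorem is essentially a bookkeeping statement, and the main content has been front-loaded.

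First I would state the trivial implication $(2)\Rightarrow(1)$ explicitly: if every $x\in X$ is a Banach diam-mean equicontinuity point, then since the Banach (Weyl) averages dominate the Cesàro averages, one has $\limsup_{N\to\infty}\frac1N\sum_{i=1}^N\diam(T^iB_\delta(x)) \le \limsup_{N-M\to\infty}\frac{1}{N-M}\sum_{i=M+1}^N\diam(T^iB_\delta(x)) < \varepsilon$ for the same $\delta$, so every $x$ is a diam-mean equicontinuity point and $(X,T)$ is diam-mean equicontinuous. Then I would invoke Proposition~\ref{regular-1} for $(1)\Rightarrow(3)$ and Proposition~\ref{regular-2} for $(3)\Rightarrow(2)$, both of which require only minimality, which is assumed. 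One should double-check that Proposition~\ref{regular-1}'s hypothesis chain is available — it silently uses Theorem~\ref{t.frequent_stability} (via ``mean equicontinuous and frequently stable'' $\Rightarrow$ $\pi_{eq}$ almost 1-1), so I would make sure the reader is pointed there, but since that is cited inside Proposition~\ref{regular-1} itself, nothing extra is needed in the theorem's proof.

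The only place that requires a moment of care — and what I would flag as the ``main obstacle,'' though it is mild — is making sure the logical cycle is genuinely closed rather than merely a chain of one-directional arrows. Proposition~\ref{regular-1} gives diam-mean equicontinuity $\Rightarrow$ regular; Proposition~\ref{regular-2} gives regular $\Rightarrow$ Banach diam-mean equicontinuous; and the elementary observation gives Banach diam-mean equicontinuous $\Rightarrow$ diam-mean equicontinuous. These three together form a cycle through all three statements, hence they are pairwise equivalent, and in particular diam-mean equicontinuity is equivalent to its a priori stronger Banach version for minimal systems — a point worth emphasizing as it is somewhat surprising (the analogous equivalence fails for non-minimal systems, cf. Section~5). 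So the proof is:

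\begin{proof}
The implication $(3)\Rightarrow(2)$ is Proposition~\ref{regular-2}, and $(1)\Rightarrow(3)$ is Proposition~\ref{regular-1}. It remains to observe that $(2)\Rightarrow(1)$: if $x\in X$ is a Banach diam-mean equicontinuity point, then for every $\varepsilon>0$ we may pick $\delta>0$ with $\limsup_{N-M\to\infty}\frac{1}{N-M}\sum_{i=M+1}^N\diam(T^iB_\delta(x))<\varepsilon$, and since
\[
\limsup_{N\to\infty}\frac1N\sum_{i=1}^N\diam(T^iB_\delta(x))\ \le\ \limsup_{N-M\to\infty}\frac{1}{N-M}\sum_{i=M+1}^N\diam(T^iB_\delta(x))\ <\ \varepsilon ,
\]
the point $x$ is also a diam-mean equicontinuity point. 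Hence $(2)$ implies $(1)$, and the three statements are equivalent.
\end{proof}
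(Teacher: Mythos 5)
Your proof is correct and follows essentially the same route as the paper, which simply combines Proposition~\ref{regular-1} ($(1)\Rightarrow(3)$) and Proposition~\ref{regular-2} ($(3)\Rightarrow(2)$) with the elementary fact, already noted in the text, that every Banach diam-mean equicontinuity point is a diam-mean equicontinuity point ($(2)\Rightarrow(1)$). Your explicit verification of that last implication via domination of the Ces\`{a}ro averages by the Banach averages is a fine way to spell out what the paper leaves as a remark.
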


A t.d.s. (even non-minimal) is mean equicontinuous if and only if it is Banach mean equicontinuous \cite{QZ,FGL}. So we ask.

\begin{ques} Do any of the equivalences of Theorem \ref{mainresult4} hold for non-minimal systems?
\end{ques}
In the following section we will see that locally (1) and (2) are not equivalent.

%\begin{cor}
%Let $(X,T)$ be a minimal t.d.s. If $(X,T)$ is null then $\pi_{eq} :X\ra X_{eq}$
%is regular.
%\end{cor}

\section{Diam-mean sensitivity and Almost diam-mean equicontinuity}

In this section first we present the counter part of diam-mean equicontinuity, i.e.
diam-mean sensitivity. Then we investigate the local version of diam-mean equicontinuity,
i.e. almost diam-mean equicontinuity. The main result of this section is the construction of an almost diam-mean equicontinuous systems with positive topological entropy.

\subsection{Diam-mean sensitivity}

\begin{defn}
A t.d.s. $(X,T)$ is \textbf{diam-mean sensitive} if there exists $\varepsilon>0 $
such that for every non-empty open set $U$ we have
\[
\overline{D}\left\{  i\in\mathbb{Z}_{+}:diam(T^{i}U)>\varepsilon\right\}
>\varepsilon.
\]

\end{defn}

The following result was proved in \cite{G}.

\begin{prop} \label{p.diam_mean_sensitive}
A minimal t.d.s. is either diam mean equicontinuous or diam-mean sensitive. A
transitive t.d.s. is either almost diam mean equicontinuous or diam-mean
sensitive.
\end{prop}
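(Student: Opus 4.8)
The plan is to establish a dichotomy, so I would argue by contradiction: assume $(X,T)$ is \emph{not} diam-mean sensitive and show that it must be (almost) diam-mean equicontinuous. The negation of diam-mean sensitivity says: for every $\varepsilon > 0$ there exists a non-empty open set $U = U(\varepsilon)$ with $\overline{D}\{i : \diam(T^i U) > \varepsilon\} \le \varepsilon$. By Lemma~\ref{equival}, a point $x$ is a diam-mean equicontinuity point precisely when for every $\eta > 0$ there is $\delta > 0$ with $\overline{D}\{i : \diam(T^i B_\delta(x)) > \eta\} < \eta$, so the density-formulation is the natural bridge between the two notions.

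The key step is a transport argument. In the minimal case, fix $\varepsilon > 0$ and let $U$ be the open set produced by the negation of diam-mean sensitivity for some parameter slightly smaller than $\varepsilon$. For an arbitrary $x \in X$, minimality gives $m \in \mathbb{N}$ with $T^m(B_\delta(x)) \subset U$ once $\delta$ is small enough that $B_\delta(x)$ is mapped into $U$ after $m$ steps (here one uses that $\{T^{-m}\inte(U)\}_m$ covers $X$ by minimality, together with continuity of $T^m$ to get a genuine $\delta$-ball). Then for all $i$, $\diam(T^{i+m} B_\delta(x)) \le \diam(T^i U)$, and since shifting the index set by the fixed constant $m$ does not change the upper density, we get $\overline{D}\{i : \diam(T^i B_\delta(x)) > \varepsilon\} \le \varepsilon$ as well. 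As $\varepsilon$ was arbitrary, Lemma~\ref{equival} makes $x$ a diam-mean equicontinuity point, and since $x$ was arbitrary, $(X,T)$ is diam-mean equicontinuous. For the transitive case one runs the same argument but only at a transitive point $x_0$: its orbit is dense, so again for every $\varepsilon$ and suitable $\delta$ one finds $m$ with $T^m(B_\delta(x_0)) \subset U$, concluding that $x_0$ is a diam-mean equicontinuity point; then the remark recorded after Definition~\ref{weakeq} (a transitive t.d.s.\ with at least one diam-mean equicontinuity point is almost diam-mean equicontinuous) finishes the proof.

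The two alternatives are visibly mutually exclusive, since diam-mean sensitivity with constant $\varepsilon$ applied to $U = B_\delta(x)$ directly contradicts the defining inequality for a diam-mean equicontinuity point at $x$ with $\eta < \varepsilon$; so no extra work is needed for exclusivity. The main obstacle is the uniformity in the minimal case: the naive argument produces, for each $x$ and each $\varepsilon$, a $\delta$ and a shift $m$ depending on $x$, which is exactly what Lemma~\ref{equival} and Definition~\ref{weakeq} permit (diam-mean equicontinuity is a pointwise notion with $\delta$ allowed to depend on the point), so in fact no global uniformity is required and the argument goes through cleanly. One should only be careful that the negation of diam-mean sensitivity must be applied with a parameter strictly below the target $\varepsilon$ (e.g.\ $\varepsilon/2$) to absorb the comparison $\diam(T^{i+m}B_\delta(x)) \le \diam(T^i U)$ and leave room in both the threshold and the density bound; this is a routine bookkeeping point rather than a genuine difficulty.
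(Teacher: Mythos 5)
Your argument is correct. Note that the paper itself gives no proof of this proposition --- it is quoted from \cite{G} --- so there is no in-paper argument to compare against; your write-up is a valid self-contained proof along the standard Auslander--Yorke dichotomy lines, which is also the spirit of the proof in \cite{G}. The key transport step is sound: negating diam-mean sensitivity at level $\varepsilon/2$ gives an open $U$ with $\overline{D}\{i:\diam(T^iU)>\varepsilon/2\}\leq\varepsilon/2$; by minimality (resp.\ at a transitive point, which exists by the usual Baire argument since $X$ is compact metric) one finds $m$ with $T^m(x)\in U$ and then $\delta$ with $T^m(B_\delta(x))\subset U$, and since $T^{i+m}B_\delta(x)\subset T^iU$ and upper density is invariant under shifting the index set and under adding finitely many terms, Lemma~\ref{equival} makes $x$ a diam-mean equicontinuity point; the transitive case is then finished by the paper's remark that one diam-mean equicontinuity point in a transitive system yields almost diam-mean equicontinuity, and exclusivity of the two alternatives is immediate as you say. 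The only things worth flagging are bookkeeping items you already handle: the strict inequality required in Lemma~\ref{equival} (absorbed by using $\varepsilon/2$), and the fact that $\delta$ is allowed to depend on the point, so no uniformity issue arises in the minimal case.
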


\begin{defn}
A t.d.s. $(X,T)$ is \textbf{Banach diam-mean sensitive} if there exists
$\varepsilon>0$ such that for every open set $U$ we have
\[
BD^{\ast}\left\{  i\in\mathbb{Z}_{+}:diam(T^{i}U)>\varepsilon\right\}
>\varepsilon.
\]

\end{defn}

The proof of the following proposition is similar to the proof of
Proposition~\ref{p.diam_mean_sensitive} in \cite{G}.

\begin{prop}
A minimal t.d.s. is either Banach diam mean equicontinuous or Banach diam-mean
sensitive. A transitive t.d.s. is either almost Banach diam mean equicontinuous
or Banach diam-mean sensitive.
\end{prop}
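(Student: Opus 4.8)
The plan is to imitate the proof of Proposition~\ref{p.diam_mean_sensitive} from \cite{G}, replacing upper density by upper Banach density throughout and checking that the density manipulations still go through. For $\varepsilon>0$ I would set
\[
X_\varepsilon=\Bigl\{x\in X:\ \exists\,\delta>0\ \text{with}\ BD^{\ast}\{i\in\mathbb{Z}_+:\diam(T^iB_\delta(x))>\varepsilon\}<\varepsilon\Bigr\},
\]
so that, by the upper-Banach-density characterization of Banach diam-mean equicontinuity points established just above, the set of such points equals $\bigcap_{k\in\N}X_{1/k}$. First I would record that each $X_\varepsilon$ is open: if $x\in X_\varepsilon$ is witnessed by $\delta$ and $d(x',x)<\delta/2$, then $B_{\delta/2}(x')\subseteq B_\delta(x)$, hence $\diam(T^iB_{\delta/2}(x'))\le\diam(T^iB_\delta(x))$ for all $i$, and $x'\in X_\varepsilon$. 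Next I would show that $X_\varepsilon$ is backward invariant, i.e.\ $T^{-1}(X_\varepsilon)\subseteq X_\varepsilon$: if $Tx\in X_\varepsilon$ is witnessed by $\delta$, choose, by continuity of $T$ at $x$, some $\delta'<\varepsilon/2$ with $T(B_{\delta'}(x))\subseteq B_\delta(Tx)$; then $\diam(B_{\delta'}(x))<\varepsilon$, and for $i\ge1$ we have $T^iB_{\delta'}(x)\subseteq T^{i-1}B_\delta(Tx)$, so $\{i\in\mathbb{Z}_+:\diam(T^iB_{\delta'}(x))>\varepsilon\}$ is contained in the forward translate by $1$ of $\{j\in\mathbb{Z}_+:\diam(T^jB_\delta(Tx))>\varepsilon\}$; since $BD^{\ast}$ is monotone under inclusion and invariant under translation, $x\in X_\varepsilon$. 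Iterating, $T^{-n}(X_\varepsilon)\subseteq X_\varepsilon$ for every $n\ge0$.

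Then I would use the hypothesis. Suppose $(X,T)$ is not Banach diam-mean sensitive. Then for every $\varepsilon>0$ there is a non-empty open $U$ with $BD^{\ast}\{i:\diam(T^iU)>\varepsilon/2\}\le\varepsilon/2$, hence also $BD^{\ast}\{i:\diam(T^iU)>\varepsilon\}\le\varepsilon/2<\varepsilon$, so $U\subseteq X_\varepsilon$ and in particular $X_\varepsilon$ is a non-empty open set. If $(X,T)$ is minimal, then every forward orbit enters the non-empty open set $X_\varepsilon$, so $X=\bigcup_{n\ge0}T^{-n}(X_\varepsilon)\subseteq X_\varepsilon$ by backward invariance; thus $X_\varepsilon=X$ for every $\varepsilon$, whence $\bigcap_k X_{1/k}=X$ and $(X,T)$ is Banach diam-mean equicontinuous. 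If $(X,T)$ is only transitive, then for any non-empty open $V$ transitivity provides $n\in\N$ with $T^nV\cap X_\varepsilon\ne\emptyset$, so some $x\in V$ satisfies $T^nx\in X_\varepsilon$ and hence $x\in X_\varepsilon$ by backward invariance; thus each $X_\varepsilon$ is open and dense, and the Baire category theorem gives that $\bigcap_k X_{1/k}$ is residual, i.e.\ $(X,T)$ is almost Banach diam-mean equicontinuous. Finally the two alternatives exclude each other: if $x$ were a Banach diam-mean equicontinuity point of a system that is Banach diam-mean sensitive with constant $\varepsilon_0$, a ball $B_\delta(x)$ witnessing $x\in X_{\varepsilon_0}$ would contradict Banach diam-mean sensitivity.

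The step I expect to require the most care is the backward invariance of $X_\varepsilon$: since $T$ need not be open or invertible, one must transport the balls forward through $T$ (using only continuity) rather than pull sets back, and then check that shifting the index set by $1$ and discarding the $i=0$ term leave the upper Banach density unchanged — this is precisely the bookkeeping that distinguishes the argument from its upper-density counterpart in \cite{G}. Everything else is a routine transcription of the classical ``almost equicontinuous versus sensitive'' dichotomy.
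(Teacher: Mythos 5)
Your proof is correct and follows essentially the route the paper intends: the paper simply defers to the argument for Proposition~\ref{p.diam_mean_sensitive} in \cite{G}, and your transcription (open sets $X_\varepsilon$ via the $BD^{\ast}$ characterization, backward invariance using translation invariance of upper Banach density, then minimality resp.\ transitivity plus Baire) is exactly that adaptation. The delicate points you flag — transporting balls forward through $T$ by continuity and the shift-by-one bookkeeping for $BD^{\ast}$ — are handled correctly.
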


\subsection{Almost diam-mean equicontinuity}

If $(X,T)$ is minimal then $x\in X$ is a diam-mean equicontinuity point if and only if it is a Banach diam-mean equicontinuity point. We will see this is not true for transitve systems. \\
Almost Banach mean equicontinuous systems always have zero topological entropy but
transitive almost mean equicontinuous systems may have positive topological
entropy \cite{LTY} (for a P-system example see \cite{GLZ}). In this section we will see that even transitive almost diam-mean equicontinuous systems may have positive topological entropy.

%A t.d.s. is
%\textbf{proximal} if for every $\varepsilon>0$ and $x,y\in X$ there exists
%$n\in\mathbb{N}$ such that $d(T^{n}x,T^{n}y)\leq\varepsilon$.

%\textbf{Question: }In general we know the variational principle can be taken
%over ergodic measures. If $(X,T)$ is an E-system and $h_{top}(T)>0$ then $%
%\sup_{\mu \text{ ergodic fully supported}}h_{\mu }(T)>0?$ If so we get the following result.

We will construct symbolic dynamical systems. Let $\Sigma_{k}=\{0,1,%
\ldots,k-1\}^{\mathbb{N}}$ with the product topology. A metric inducing the
topology is given by $d(x,y)=0$ if $x=y$, and $d(x,y)=\frac{1}{i}$ if $x\not
=y$ and $i=\min\{i:x_{i}\not =y_{i}\}$ when $x=x_{1}x_{2}\ldots$ and $%
y=y_{1}y_{2}\ldots$. For $n\in\mathbb{N}$, we call $A\in\{0,1,\ldots,k-1%
\}^{n}$ a \textbf{finite word of length $n$} and denote $|A|=n$. For two
words $A=x_{1}\ldots x_{n}$ and $B=y_{1}\ldots y_{m}$ define $AB=x_{1}\ldots
x_{n}y_{1}\ldots y_{m}$. For a words $A$, let $[A]$ be the collection of $%
x\in\Sigma_{k}$ starting from $A$. For $x\in\Sigma_{k}$ and $i<j$, $x_{[i,j]}
$ stands for the finite words $x_{i}x_{i+1}\ldots x_{j}$.

The \textbf{shift map} $\sigma:\Sigma_{k}\rightarrow\Sigma_{k}$ is defined
by the condition that $\sigma(x)_{n}=x_{n+1}$ for $n\in\mathbb{N}$. It is
clear that $\sigma$ is a continuous surjection. The dynamical system $%
(\Sigma _{k},\sigma)$ is called the \textbf{full shift}. If $X$ is
non-empty, closed, and $\sigma$-invariant (i.e. $\sigma(X)\subset X$), then
the dynamical system $(X,\sigma)$ is called a \textbf{subshift}.

%some infinite set.
%\textcolor{blue}{I changed the presentation of the following example}

%\newpage
\begin{exmp}
Let $y=(y_{1},y_{2},\ldots )\in \{2,3\}^{\mathbb{N}}$ %almost periodic
and $\mathcal{K}=\{k_{n}\}$ a sequence of positive integers. We will
construct a subshift $X_{y}^{\mathcal{K}}\subset \Sigma _{4}=\{0,1,2,3\}^{\N}
$.

\medskip To do this we recursively define $A_{n}$. Let $B_{n}=y_{1}\ldots
y_{n}$ , and $A_{1}=11.$ Assume that $A_{n}$ is defined. We set

\[
A_{n+1}=A_{n}0^{k_{n}}B_{n}A_{n}.
\]

Let $x=(x_{1},x_{2},\ldots )=\lim_{n\rightarrow \infty }A_{n}$. Then $x$ is
equal to
\[
A_{1}{0}^{k_{1}}B_{1}A_{1}{0}% ^{k_{2}}B_{2}A_{1}0^{k_{1}}B_{1}A_{1}{0}%
^{k_{3}}B_{3}A_{1}0^{k_{1}}B_{1}A_{1}0^{k_{2}}B_{2}A_{1}0^{k_{1}}B_{1}A_{1}0^{k_{4}}B_{4}...
\]

Let $X_{y}^{\mathcal{K}}$ be the orbit closure of $x$. It is clear that $x$
is a recurrent point so $(X_{y}^{\mathcal{K}},\sigma )$ is transitive and
that $\overline{orb(y)}\subset X_{y}^{\mathcal{K}}$.
\end{exmp}
\begin{thm}\label{example1}
There exists $\mathcal{K}=\{k_{i}\}$ such that $(X_{y}^{\mathcal{K}},\sigma )
$ is an almost diam-mean equicontinuous t.d.s.
\end{thm}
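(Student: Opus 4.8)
The plan is to choose the gap sizes $k_n$ growing very rapidly and to show that the point $x$ itself is a diam-mean equicontinuity point; since $(X_y^{\mathcal{K}},\sigma)$ is transitive, Proposition~\ref{p.diam_mean_sensitive} (or the remark that a transitive system with one diam-mean equicontinuity point is almost diam-mean equicontinuous) then gives the conclusion. The intuition is that the word $x$ is built by nesting copies of the fixed block $A_1=11$ separated by long runs $0^{k_n}$ of zeros, so that when we look at the cylinder $[x_{[1,\ell]}]=B_\delta(x)$ with $\delta=1/\ell$, all points in this cylinder agree with $x$ on a long prefix, and under iteration by $\sigma$ they stay together for a long time because most coordinates they are forced to share fall inside the long, structureless $0$-runs where $X_y^{\mathcal{K}}$ looks essentially like the orbit closure of $\overline{orb(y)}$ together with shifts of $0^\infty$-like patterns.

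The key steps, in order, are as follows. First, I would fix $\varepsilon>0$, pick $m$ with $1/m<\varepsilon$, and take $\delta=1/m$, so that $B_\delta(x)$ is the cylinder determined by $x_{[1,m]}$. Second, I would establish a combinatorial lemma: if $z\in X_y^{\mathcal{K}}$ and $z$ agrees with some iterate $\sigma^j x$ on a prefix of length $p$, then either $z$ agrees with $\sigma^j x$ on a much longer prefix, or the disagreement is confined to positions where $\sigma^j x$ lies in a region already constrained by the recursive structure; the point is that the occurrences of the "defining" block $A_1=11$ in $x$ become sparse (of density tending to $0$) relative to the long $0$-runs, provided $k_n/|A_n|\to\infty$. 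Third, using this, I would show that for a set of indices $i$ of upper density close to $1$, $\diam(\sigma^i B_\delta(x))$ is small: precisely, $\sigma^i B_\delta(x)$ is contained in a cylinder of length at least $m$ (hence of diameter $\le 1/m<\varepsilon$) whenever the window $[i+1,i+m]$ is far from any occurrence of the synchronizing blocks, and such windows have density as close to $1$ as desired once $\mathcal{K}$ grows fast enough. Summing up via Lemma~\ref{equival}, this gives $\overline{D}\{i:\diam(\sigma^i B_\delta(x))>\varepsilon\}<\varepsilon$, i.e.\ $x$ is a diam-mean equicontinuity point.

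The main obstacle is making the second step precise: one must verify that the only way two points of $X_y^{\mathcal{K}}$ sharing a long prefix can later diverge is through an occurrence of a short synchronizing word, and then count these occurrences. The orbit closure contains $\overline{orb(y)}\subset\{2,3\}^{\mathbb{N}}$, which may itself be complicated, so the counting must be done at the level of the positions of the blocks $B_n$ and $A_n$ inside $x$, not at the level of individual symbols. Concretely, I expect to need a recursive estimate showing that the number of positions in $A_{n+1}$ at which a "branching" can occur is $O(|A_n|)$ plus the branchings inside the $B_n$-part, while $|A_{n+1}|\approx |A_n|+k_n+n$; choosing $k_n$ so that these branching positions have density $\to 0$ (for instance $k_n\ge 2^{|A_n|}$) is what forces the diameter averages to be small. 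Once this density statement is in hand, the rest is a routine application of Lemma~\ref{equival} and Proposition~\ref{p.diam_mean_sensitive}.
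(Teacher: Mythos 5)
Your proposal follows essentially the same route as the paper: show that the transitive point $x$ is a diam-mean equicontinuity point by proving that every $z$ in the orbit closure beginning with $A_1$ is forced into the long $0$-runs at positions controlled by $|A_i|$ (the paper's statement $p_i^z\le p_i$ for all $z\in X_y^{\mathcal K}\cap[A_1]$), so that the set of times at which the shifted cylinder can have large diameter has density tending to $0$ once $k_n$ grows fast relative to $|A_n|$, and then conclude by transitivity. The only slips are cosmetic ($|A_{n+1}|=2|A_n|+k_n+n$, not $|A_n|+k_n+n$, and the small-diameter windows must be windows on which \emph{all} points of the cylinder are forced to equal $0$, which is exactly what the paper's combinatorial lemma provides), so your sketch matches the published argument.
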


\begin{proof}
	For $z\in X_{y}^{\mathcal{K}}$ and $i\in\N$, let $p_{i}^{z}$ be the
        smallest integer such that $z_{[p_{i}^z+1,p_{i}^z+k_{i}]}=0^{k_{i}}$.
        Set $p_{i}=p_{i}^{x}$. Note that $p_{i+1}=2p_{i}+k_{i}+i$, since it is
        easy to check that $p_i=\#A_i$. Let $\{k_{i}\}$ be a sequence defined
        inductively such that $$(2p_{i}+p_{i+1}-k_{i})/(k_{i}-p_i)\rightarrow
        0.$$ Let $\varepsilon >0$.\ We will prove that
	\[
	\limsup_{N\rightarrow \infty }\frac{1}{N}a_{N}\leq \varepsilon ,
	\]%
	where
	\[
	a_{N}=\#\{1\leq m\leq N:\text{there exists}\ z\in X_{y}^{\mathcal{K}}\cap
	\lbrack A_{1}]\ \text{such that}\ z_{m}\not=0\}.
	\]
For a given $i\in\N$, $x$ can be written as
$$x=A_i0^{k_i}B_iA_i\ 0^{k_{i+1}}B_{i+1}\ A_i0^{k_i}B_iA_i\ 0^{k_{i+2}}B_{i+2}\ A_i0^{k_i}B_iA_i\ 0^{k_{i+1}}B_{i+1}\ A_i0^{k_i}B_iA_i
\ldots.$$
Thus, for every $\sigma ^{n}(x)\in \lbrack A_{1}]$ and $i\in \mathbb{N}$ we have
  $p_{i}^{\sigma ^{n}(x)}\leq p_{i}$, since $A_1$ only appears in $A_i$ in the
  above equality, $A_i$ is followed by $0^{k_j}$ for $j\ge i$ and $\#A_i=p_i$.
  This implies that for every $z\in X_{y}^{\mathcal{K}}\cap \lbrack A_{1}]$ and
    $i\in \mathbb{N}$ we have $p_{i}^{z}\leq p_{i}$. Thus for every $i^{\prime
    }$ such that $p_{i}+1\leq i^{\prime }\leq k_{i}-p_i$ we have that
    $z_{i^{\prime }}=0$ for every $z\in X_{y}^{\mathcal{K}}\cap \lbrack
    A_{1}].$\smallskip

	Now, we assume that $(2p_{i}+p_{i+1}-k_{i})/(k_{i}-p_i)\leq \varepsilon
        $ for $i\ge i_0-1$.  For $i\ge i_0$ let $N\in \left[
          p_{i},p_{i+1}\right]$. We have that
	\begin{eqnarray*}
		\{m\in [1,N] \mid z_m\neq 0\} & \subseteq &\{1\leq m\leq
                p_{i}:\exists\ z\in X_{y}^{\mathcal{K}}\cap \lbrack
                A_{1}]\ \text{such that}\ z_{m}\not=0\} \\ & \cup & \{p_{i}+1 \leq
                  m\leq k_{i}-p_i:\exists\ z\in X_{y}^{\mathcal{K} }\cap
                    \lbrack A_{1}]\ \text{such that}\ z_{m}\not=0\} \\ &\cup&
                  \{k_{i}-p_i+1 \leq m\leq p_{i+1}:\exists\ z\in
                  X_{y}^{\mathcal{K} }\cap \lbrack A_{1}]\ \text{such
                    that}\ z_{m}\not=0\} \ .
	\end{eqnarray*}
        By the previous argument we have that
	\[
	\#\{p_{i}+1\leq m\leq k_{i}-p_{i}:\exists\ z\in X_{y}^{\mathcal{K}}\cap
	\lbrack A_{1}]\ \text{such that}\ z_{m}\not=0\}=0.
	\]
		We claim that for all $N\in [k_i-p_i+1,k_{i+1}-p_{i+1}]$ we have
                $\frac{1}{N}a_{N}\leq \varepsilon $. As this holds for all
                $i\geq i_0$ and $\varepsilon>0$ was arbitrary, this will imply
                $\lim_{N\to\infty} a_N/N=0$.\smallskip

       If $N\in [k_i-p_i+1,p_{i+1}]$, then
$$\frac{1}{N}a_{N}\leq \frac{1}{k_{i}-p_i}a_{N}\leq
       \frac{1}{k_{i}-p_i}(p_{i}+p_{i+1}-(k_{i}-p_{i}))<\ep$$ If $N\in
            [p_{i+1},k_{i+1}-p_{i+1}]$, then $ a_N=a_{p_{i+1}}$ and hence
     $a_N/N\leq a_{p_{i+1}}/p_{i+1} < \varepsilon
     $
This implies that
	\[
	\lim_{N\rightarrow \infty }\frac{1}{N}a_{N}=0.
	\]%$ by the above.

Hence, we have $\lim_{N\to\infty} a_N/N=0$ as stated above. To conclude the
proof we need a standard argument. There is $K_{\varepsilon }\in \mathbb{N}$
such that for any $a,b\in \Sigma _{4}$ if $a[1,K_{\varepsilon
}]=b[1,K_{\varepsilon }]$ then $d(a,b)<\varepsilon$% . We have that
	\begin{align*}
	\limsup_{N\rightarrow \infty }\frac{1}{N}\sum_{i=1}^{N}\diam(T^{i}[A_{1}])&
	\leq \varepsilon\cdot \limsup_{N\rightarrow \infty }\frac{1}{N}%
	\#\{1\leq i\leq N\colon (T^{i}z)_{[1,K_\ep]}=0^{K_\ep},\forall \ z\in \lbrack
	A_{1}]\} \\
	& +\limsup_{N\rightarrow \infty }\frac{1}{N}\#\{1\leq i\leq N\colon
	(T^{i}z)_{[1,K_\ep]}\neq 0^{K_\ep},\text{for some}\ z\in \lbrack A_{1}]\} \\
	& \leq \varepsilon+\limsup_{N\rightarrow \infty }\frac{1}{N}%
	K_{\varepsilon }a_{N} = \varepsilon \ .
	\end{align*}
	
	Thus $x$ is a diam-mean equicontinuity point.
\end{proof}

Furthermore if we choose $y$ so that $(\overline{orb(y)},\sigma)$ has
positive topological entropy % and is proximal
then we obtain the following corollary.

\begin{cor}
	There exists a transitive almost diam-mean equicontinuous t.d.s. with
        positive topological entropy.
\end{cor}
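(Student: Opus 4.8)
The plan is to apply Theorem~\ref{example1} to a cleverly chosen sequence $y$, so that a positive-entropy subsystem of the form $\overline{orb(y,\sigma)}$ forces positive topological entropy of the ambient system. First I would record the key structural observation: in the recursion defining $A_n$ one has $\#A_1=2$ and $\#A_{n+1}=2\#A_n+k_n+n$ (since $|B_n|=n$), so the lengths $p_i=\#A_i$ depend only on $\mathcal{K}$ and not on the actual digits of $y\in\{2,3\}^{\mathbb{N}}$. Hence the sequence $\mathcal{K}=\{k_i\}$ furnished by the proof of Theorem~\ref{example1} -- the one satisfying $(2p_i+p_{i+1}-k_i)/(k_i-p_i)\to 0$ -- may be fixed once and for all, independently of the choice of $y$, and for that $\mathcal{K}$ the system $(X_y^{\mathcal{K}},\sigma)$ is transitive and almost diam-mean equicontinuous for \emph{every} $y\in\{2,3\}^{\mathbb{N}}$.

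Next I would choose $y$ to be a transitive point of the full two-shift on the alphabet $\{2,3\}$, so that $\overline{orb(y,\sigma)}=\{2,3\}^{\mathbb{N}}$; such a point exists because the full shift is transitive. As recorded in the construction preceding Theorem~\ref{example1}, $\overline{orb(y,\sigma)}$ is a subsystem of $(X_y^{\mathcal{K}},\sigma)$. Since topological entropy is monotone under passing to subsystems,
\[
h_{top}(X_y^{\mathcal{K}},\sigma)\ \geq\ h_{top}\big(\overline{orb(y,\sigma)},\sigma\big)\ =\ h_{top}\big(\{2,3\}^{\mathbb{N}},\sigma\big)\ =\ \log 2\ >\ 0\ .
\]
Together with Theorem~\ref{example1}, this exhibits $(X_y^{\mathcal{K}},\sigma)$ as a transitive, almost diam-mean equicontinuous t.d.s. of positive topological entropy, which is exactly the assertion of the corollary.

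I do not expect any genuine obstacle here; the only subtlety is the interplay between the two choices, namely that the sequence $\mathcal{K}$ needed to make the system almost diam-mean equicontinuous must not secretly depend on the symbols of $y$. This is settled by the first paragraph, since $p_i=\#A_i$ is a function of $\mathcal{K}$ alone. Everything else is either quoted verbatim from Theorem~\ref{example1} (transitivity and almost diam-mean equicontinuity) or is the elementary monotonicity of topological entropy under subsystems, applied to the sub-full-shift $\{2,3\}^{\mathbb{N}}\subset X_y^{\mathcal{K}}$.
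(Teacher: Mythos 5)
Your proposal is correct and follows essentially the same route as the paper: fix a $y$ whose orbit closure has positive entropy, invoke Theorem~\ref{example1} for transitivity and almost diam-mean equicontinuity, and use $\overline{orb(y,\sigma)}\subset X_y^{\mathcal{K}}$ together with monotonicity of topological entropy. Your first paragraph (showing $\mathcal{K}$ can be chosen independently of the digits of $y$) is a harmless but unnecessary precaution, since Theorem~\ref{example1} is stated for an arbitrary fixed $y$, so one may simply pick the transitive point $y$ first and then take whatever $\mathcal{K}$ the theorem provides.
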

Almost Banach diam-mean equicontinuous systems are almost Banach mean
equicontinuous and thus they always have zero topological entropy
\cite{LTY}. This yields the following.
\begin{cor}
	There exists diam-mean equicontinuity points on transitive systems that
        are not Banach diam-mean equicontinuity points.
\end{cor}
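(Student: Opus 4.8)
The plan is to take the concrete example already constructed and argue by contradiction, using the dichotomy between almost Banach diam-mean equicontinuity and positive topological entropy.

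First I would fix $y=(y_1,y_2,\ldots)\in\{2,3\}^{\N}$ so that the subshift $(\overline{orb(y)},\sigma)$ has positive topological entropy (such $y$ clearly exists), and then take $\mathcal{K}=\{k_i\}$ as produced by Theorem \ref{example1}, so that the generating point $x=\lim_{n\to\infty}A_n$ is a diam-mean equicontinuity point of $(X_y^{\mathcal{K}},\sigma)$. Since $\overline{orb(y)}\subset X_y^{\mathcal{K}}$ and topological entropy is monotone under inclusion of subsystems, $(X_y^{\mathcal{K}},\sigma)$ has positive topological entropy; this is exactly the content of the preceding corollary. Note also that $x$ is, by the very construction of $X_y^{\mathcal{K}}$ as its orbit closure, a transitive point.

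Now suppose, for a contradiction, that $x$ were in fact a Banach diam-mean equicontinuity point. Since $(X_y^{\mathcal{K}},\sigma)$ is transitive and possesses at least one Banach diam-mean equicontinuity point, the characterization recorded in Section 4 shows that $(X_y^{\mathcal{K}},\sigma)$ is almost Banach diam-mean equicontinuous. Every almost Banach diam-mean equicontinuous system is almost Banach mean equicontinuous, and such systems have zero topological entropy by \cite{LTY}. This contradicts the positivity of the entropy of $(X_y^{\mathcal{K}},\sigma)$. Hence $x$ is a diam-mean equicontinuity point of a transitive system that is not a Banach diam-mean equicontinuity point, as claimed.

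There is essentially no hard step here: the argument is a short combination of Theorem \ref{example1}, the transitivity dichotomy for (almost Banach) diam-mean equicontinuity, and the zero-entropy theorem for almost Banach mean equicontinuous systems. The only point deserving a word of care is the implication ``transitive $+$ one Banach diam-mean equicontinuity point $\Rightarrow$ almost Banach diam-mean equicontinuous'', which is the Banach analogue of the corresponding statement for diam-mean equicontinuity and is proved in the same way, using that the orbit of a transitive point meets every non-empty open set so that the stability estimates propagate to a dense $G_\delta$ of points.
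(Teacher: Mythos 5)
Your argument is correct and is essentially the paper's own: the paper derives this corollary from the positive-entropy example of Theorem~\ref{example1} together with the facts (stated in Section 4) that a transitive system with a Banach diam-mean equicontinuity point is almost Banach diam-mean equicontinuous, hence almost Banach mean equicontinuous, hence of zero entropy by \cite{LTY}. Your write-up just makes the contradiction explicit, so nothing further is needed.
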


 In summary, for local properties we have the following diagram.
\[%
\begin{array}
[c]{ccc} \text{1) Banach diam-mean eq. point} & \rightarrow & \text{2)
diam-mean eq.  point}\\ \downarrow & & \downarrow\\ \text{3) Banach mean
  eq. point} & \rightarrow & \text{4) mean eq. point}%
\end{array}
\]
Every implication is strict. Furthermore, there is no relationship between 2)
and 3). By the same reasoning of the previous corollary we have that there exist
diam-mean equicontinuity points that are not Banach mean equicontinuity
points. On the other hand, 3) does not imply 2); consider a non diam-mean
equicontinuous system that is mean equicontinuous, and hence Banach mean
equicontinuous.
\begin{rem}
	Note that in the proof of Theorem~\ref{example1} we prove that the
        average diameter of $A_1$ is zero. This property is stronger than almost
        diam-mean equicontinuity.
\end{rem}

\section{Furstenberg's question}
%\textcolor{blue}{Furstenberg didn't ask the question for minimal systems, but
%if we now it for a minimal component then it is enough.}

Given a t.d.s. $(X,T)$ we say a point $x$ is \textbf{minimal} if
$(\overline{orb(x)},T)$ is a minimal t.d.s.  In \cite[p. 231]{F81}
Furstenberg asked if for every t.d.s. $(X,T)$ and $d\in \N$, there is $x\in X$
such that $(x,x,\ldots,x)$ is a minimal point for $T\times T^2\times \ldots
\times T^d$. We have a positive answer for the class of mean equicontinuous
systems.  Since each t.d.s. has a minimal subset, we only need to consider the
question for a minimal t.d.s.  Recall that a minimal mean equicontinuous system
is uniquely ergodic.
	
%\textcolor{blue}{I am not sure for which argument the mean equicontinuity is really needed. Isn't it general there exists a minimal point $(x,x_2)\in X\times X$ ?}
\begin{thm} Let $(X,T)$ be minimal mean equicontinuous t.d.s. There is a Borel set $X_0$
	with $\mu(X_0)=1$ (where $\mu$ is the unique measure) such that for any
        $d\in\N$ and $x\in X_0$, $(x,x,\ldots,x)$ is a minimal point for
        $T\times T^2\times \ldots \times T^d$.
\end{thm}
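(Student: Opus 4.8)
The plan is to fix $d\in\N$, produce a $\mu$-conull set $X_0^{(d)}$ of points $x$ for which $(x,\dots,x)$ is a minimal point of $T^{(d)}:=T\times T^2\times\dots\times T^d$, and then set $X_0=\bigcap_{d\ge1}X_0^{(d)}$. If $(X,T)$ is finite the statement is clear, so assume $X$ is infinite; then $\mu$ is non-atomic, and since by \cite{LTY,DG} and Lemma~\ref{LTY-G} the map $\pi_{eq}$ is a measure-theoretic isomorphism $(X,\mu,T)\to(X_{eq},\nu_{eq},T_{eq})$, the system $(X_{eq},T_{eq})$ is an infinite minimal rotation $(G,R_\alpha)$ on a compact metrizable monothetic group $G$ with $\alpha$ a topological generator. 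Fix a Borel section $\rho:G\to X$ of $\pi_{eq}$ realising this inverse and a conull $R_\alpha$-invariant $G_0\subseteq G$ on which $\pi_{eq}\circ\rho=\mathrm{id}$ and $T\circ\rho=\rho\circ R_\alpha$ hold genuinely (hence $T^k\rho(h)=\rho(h+k\alpha)$ for $h\in G_0$). A short Besicovitch-pseudometric estimate ($\rho_b$ for $T^{j}$ is controlled by $j\,\rho_b$ for $T$, and $\rho_b$ on a product is controlled by the sum of the coordinate pseudometrics) shows that each $(X,T^j)$, and hence $(X^d,T^{(d)})$, is mean equicontinuous; its equicontinuous factor $\pi^{(d)}:=\pi_{eq}\times\dots\times\pi_{eq}$ conjugates $T^{(d)}$ to $R:=R_{(\alpha,2\alpha,\dots,d\alpha)}$ on $G^d$, and the $R$-orbit closure of $(g,\dots,g)$ is the coset $C_g=(g,\dots,g)+H$, where $H=\overline{\langle(\alpha,\dots,d\alpha)\rangle}$; on $C_g$ the map $R$ is minimal and uniquely ergodic.

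Next I would attach to $\mu$-a.e. $x$ (writing $g=\pi_{eq}(x)\in G_0$) the map $\Theta_g(\beta)=(\rho(g+\beta),\rho(g+2\beta),\dots,\rho(g+d\beta))$, defined on the conull, $R_\alpha$-invariant set $\Omega_g=\{\beta:g+j\beta\in G_0,\ 1\le j\le d\}$ (conull by Fubini). It is injective on $\Omega_g$ (the first coordinate recovers $\beta$), satisfies $\Theta_g\circ R_\alpha=T^{(d)}\circ\Theta_g$ and $\pi^{(d)}\circ\Theta_g(\beta)=(g+\beta,\dots,g+d\beta)$, and maps $0$ to $(x,\dots,x)$. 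Thus $\tilde\lambda_g:=(\Theta_g)_*m_G$ is a $T^{(d)}$-invariant measure with $(X^d,\tilde\lambda_g,T^{(d)})\cong(G,m_G,R_\alpha)$ — ergodic, with discrete spectrum — and $(\pi^{(d)})_*\tilde\lambda_g$ equal to Haar measure on $C_g$. Expanding $f\circ(\rho\times\dots\times\rho)\in L^\infty(G^d)$ in characters of $G^d$ and using $\tfrac1N\sum_{n<N}\omega^n\to\mathbf 1_{\{\omega=1\}}$ together with the fact that a character of $G$ trivial on the topological generator $\alpha$ is trivial, one obtains $L^2(m_G)$-convergence — and, by the standard pointwise refinement for discrete-spectrum systems, $m_G$-a.e. convergence — of $\tfrac1N\sum_{n<N}f\bigl((T^{(d)})^n(x,\dots,x)\bigr)$ to $\int f\,d\tilde\lambda_g$. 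Running over a countable dense family of $f\in C(X^d)$ shows that for $\mu$-a.e. $x$ the diagonal point $(x,\dots,x)$ is generic for $\tilde\lambda_g$; in particular $Y_g:=\supp\tilde\lambda_g\subseteq M_x:=\overline{orb((x,\dots,x),T^{(d)})}$.

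I would then show $Y_g$ is minimal. A generic point of the full-support measure $\tilde\lambda_g$ has dense orbit, so $(Y_g,T^{(d)})$ is transitive; and $\pi^{(d)}|_{Y_g}:Y_g\to C_g$ is a factor map onto a minimal system which is moreover a measure isomorphism for $\tilde\lambda_g$ (via $\Theta_g$), so $(\pi^{(d)})^{-1}(\eta)\cap Y_g$ is a singleton for all $\eta$ in a conull — hence dense — subset $N_1\subseteq C_g$. Any minimal subset $Y_g'\subseteq Y_g$ maps onto $C_g$, so over each $\eta\in N_1$ its point is the unique point of $Y_g$ over $\eta$; therefore $Y_g'$ contains the $\tilde\lambda_g$-conull (hence dense) set $(\pi^{(d)})^{-1}(N_1)\cap Y_g$, and being closed it equals $Y_g$.

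The step I expect to be the main obstacle is checking that for $\mu$-a.e. $x$ the diagonal point $(x,\dots,x)=\Theta_g(0)$ actually lies in $Y_g$, not merely in $M_x$; equivalently, that $m_G\{\beta:\max_{1\le j\le d}d(\rho(g+j\beta),\rho(g))<\delta\}>0$ for every $\delta>0$ (and a.e. $g$) — a multiple-recurrence statement that is delicate because $\rho$ is only Borel. For this I would use that $\rho$ is approximately continuous at $m_G$-a.e. $g$ (so $A_\delta(g):=\{h:d(\rho(h),\rho(g))<\delta\}$ has Haar density $1$ at $g$) together with the fact that $\bigcap_{j\le d}\{\beta:j\beta\in B_\eta(0)\}$ is a neighbourhood of $0$ of positive Haar measure (this is the syndeticity of return times to $B_\eta(0)$ in the minimal equicontinuous factor $(H,R)$), intersecting the two; the technical care lies in the bookkeeping of metric balls under $\beta\mapsto j\beta$ when $jG\subsetneq G$, which is handled by a further Fubini argument since $\beta\mapsto j\beta$ pushes $m_G$ onto Haar measure on $jG$. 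Granting this, for $\mu$-a.e. $x$ we have $(x,\dots,x)\in Y_g$ with $Y_g$ closed and $T^{(d)}$-invariant, hence $M_x\subseteq Y_g$; combined with $Y_g\subseteq M_x$ this forces $M_x=Y_g$, which is minimal, i.e. $(x,\dots,x)$ is a minimal point. Taking $X_0=\bigcap_{d\ge1}X_0^{(d)}$ completes the proof.
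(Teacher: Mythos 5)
The central gap is in your second paragraph, at the words ``by the standard pointwise refinement for discrete-spectrum systems, $m_G$-a.e.\ convergence''. What you need there is: for $m_G$-a.e.\ $g$, the averages $\frac1N\sum_{n<N}\Phi(g+n\alpha,g+2n\alpha,\dots,g+dn\alpha)$ converge, where $\Phi=f\circ(\rho\times\dots\times\rho)$ is merely bounded Borel. This is not a Birkhoff-type statement for the system $(G,m_G,R_\alpha)$: via $\Theta_g$ it is the Birkhoff average of the $L^\infty$ function $f\circ\Theta_g$ along the orbit of the \emph{single} point $\beta=0$, while the ``a.e.'' you are allowed to use lives in the $g$-variable, which shifts the function by $(t,\dots,t)$ rather than by $(s,2s,\dots,ds)$; no Fubini argument converts one into the other. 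Equivalently, viewing the average as a Birkhoff average of $\Phi$ for the rotation by $(\alpha,2\alpha,\dots,d\alpha)$ on $G^d$, the relevant starting points $(g,\dots,g)$ form a set that is null in each coset $C_g$ (indeed $C_g$ meets the diagonal only in $(g,\dots,g)$ when $d\ge2$), so a.e.\ convergence on cosets says nothing about them; and unique ergodicity of $C_g$ only gives everywhere-convergence for \emph{continuous} $\Phi$, which $\rho$ destroys. What you are asserting is precisely the pointwise multiple ergodic theorem at diagonal points for a Kronecker system with $L^\infty$ functions -- exactly the deep input the paper imports from Huang--Shao--Ye \cite{HSY} (Kronecker systems being distal). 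The $L^2(m_G)$ convergence you sketch via characters is classical, but there is no ``standard refinement'' upgrading it to a.e.\ convergence at the diagonal; attempts via Lusin approximation fail because the error set's visit frequency along the diagonal orbit is governed by the measure $\int(\mathrm{Haar\ on\ }C_g)\,dm_G(g)$ supported on the (generally null) set $\Delta+H\subset G^d$, and one runs into the same chicken-and-egg problem.

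Two further steps are also unfinished: (a) the claim that $(\pi^{(d)})^{-1}(\eta)\cap Y_g$ is a singleton for conull $\eta\in C_g$ does not follow from $\Theta_g$ being a measure isomorphism, since $Y_g=\supp\tilde\lambda_g$ may contain points outside $\Theta_g(\Omega_g)$ lying over such $\eta$; and (b) the step you flag as the main obstacle, $(x,\dots,x)\in Y_g$, relies on a Lebesgue-density/approximate-continuity argument for $\rho$ on a general compact monothetic group with respect to the images of balls under $\beta\mapsto j\beta$, which is not a standard differentiation fact (a Lusin-set argument as in the paper's Proposition~\ref{regular-2} would be safer). For contrast, the paper avoids all of this: it takes the full-measure set $X_0$ directly from the HSY pointwise theorem applied to indicators of a countable base, gets positivity of the limit from Furstenberg's multiple recurrence, picks a minimal point $(x,x_2,\dots,x_d)$ in the fibre product over $\pi_{eq}(x)$, and uses mean equicontinuity (via Lemma~\ref{LTY-G}, applied to each $T^j$) to get Banach-density-one sets of times where $T^{jn}x_j$ is close to $T^{jn}x$; intersecting these with the positive-upper-density return set puts $(x,\dots,x)$ in the orbit closure of a minimal point. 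If you want to salvage your route, you must either cite \cite{HSY} (or Bourgain for $d=2$) for the genericity of the diagonal, or replace genericity by a softer recurrence argument -- at which point you are essentially back to the paper's proof.
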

\begin{proof} By \cite{LTY,DG} the maximal equicontinuous factor map $\pi_{eq}$ yields an isomorphism and
$(X,T,\mu)$ has discrete spectrum, and hence is measurably distal. Let $\{V_i\}$
  be the base for the topology of $X$.

Let $X_0$ be the set such that the pointwise multiple ergodic theorem
(\cite{HSY}) holds for $x\in X_0$, each $1_{V_i}$ and each $d\in\N$, i.e.
$$\frac{1}{N}\sum_{n=0}^{N-1} 1_{V_i}(T^nx)1_{V_i}(T^{2n}x)\ldots
        1_{V_i}(T^{dn}x)$$ converges. Thus $\mu(X_0)=1$. Moreover, by
        Furstenberg the limit is positive.
	
	Now fix $d\in\N$.
%For simplicity, we only show the theorem when $d=2$.
Then for $x\in X_0$, and any non-empty open neighborhood $U=B_\ep(x)$ of $x$,
there is $i\in\N$ such that $x\in V_i\subset U$.  Thus,
	
\begin{equation}\label{multiple}
\liminf_{N\rightarrow \infty} \frac{1}{N}\sum_{n=0}^{N-1} 1_U(T^nx)\ldots 1_U(T^{dn}x)\ge
	\lim_{N\rightarrow \infty} \frac{1}{N}\sum_{n=0}^{N-1} 1_{V_i}(T^nx)\ldots 1_{V_i}(T^{dn}x)>0.
\end{equation}

	Let $y=\pi_{eq}(x)$ and $\pi_{eq}^{-1}y=X_y$. Since $(y,y,\ldots,y)$ is
        minimal for $T_{eq}\times T^2_{eq}\times \ldots \times T_{eq}^{d}$ there
        is a minimal point $(x_1,x_2,\ldots,x_d)\in X_y^d$ for $T\times
        T^2\times \ldots \times T^d$. We may assume that $x_1=x$. To see this,
        let $T^{n_i}x_1\rightarrow x$ and assume that $T^{2n_i}x_2\rightarrow
        x_2', \ldots, T^{dn_i}x_d\rightarrow x_d'$. Since $X_{eq}$ is
        equicontinuous, $T^{n_i}y\rightarrow y$ implies $T^{jn_i}y\rightarrow y$
        for $j=2,\ldots,d$. We conclude that $\pi_{eq}(x_j')=y$ for
        $j=2,\ldots,d$.

	Let
	$$B_1=\{n\in\N: T^nx\in U, T^{2n}x\in U\}\ \text{and}\ B_j=\{n\in\N:
        d(T^{jn}x,T^{jn}x_j)<\ep\},\ 2\le j\le d.$$
	
	Since $B_1$ has positive upper density (see (\ref{multiple})) and $B_j$
        has Banach density 1 (see \cite{LTY} by considering $T^j$), there must
        exist $n\in \cap_{j=1}^dB_j\not=\emptyset$. Thus, we have
	$$T^n(x)\in U=B_{\ep}(x)\ \text{and}\ T^{jn}x_j\in B_{2\ep}(x),\ 2\le j\le d.$$
	We conclude that $(x,x, \ldots,x)$ is in the orbit closure of $(x,x_2, \ldots,x_d)$ under $T\times T^2\times \ldots \times T^d$
and hence minimal under $T\times T^2\times \ldots \times T^d$.
	\end{proof}
%\begin{cor}
%	Let $(X,T)$ be a t.d.s. Then there exists $x\in X$ such that $(x,x)$ is a minimal point for $T\times T^2$.
%\end{cor}
 This is not the first time that an open question is answered partially for mean
 equicontinuous systems. For instance, in \cite{DK} Downarowicz and Kasjan
 proved the Sarnak conjecture in this case.


\begin{thebibliography}{99}
	\bibitem{A} J. Auslander, \emph{Minimal flows and their extensions},
	North-Holland Publishing Co., Amsterdam, 1988.
	
	\bibitem{A59} J. Auslander, \emph{Mean-$L$-stable systems}, Illinois J.
	Math., \textbf{3} (1959): 566--579.

        \bibitem{BLM} M. Baake, D. Lenz, and R.V. Moody. \emph{Characterization
          of model sets by dynamical systems}. Ergodic Theory Dynam. Systems
          \textbf{27}(2) (2007): 341--382.

	\bibitem{AY} J. Auslander and J.A. Yorke, \emph{Interval maps, factors of
		maps, and chaos}, Tohoku Mathematical Journal, \textbf{32}(2) (1980):
	177--579.
	
	\bibitem{D} T. Downarowicz, \textit{Survey of odometers and Toeplitz flows},
	Contemporary Mathematics 385 (2005): 7-38.
	
	\bibitem{DG} T. Downarowicz and E. Glasner, \textit{Isomorphic extension and
		applications}, Topological Methods in Nonlinear Analysis \textbf{48}.1 (2016): 321-338
	
	\bibitem{DK} T. Downarowicz and S. Kasjan, \textit{Odometers and Toeplitz systems revisited in the context of Sarnak's conjecture}, Studia Mathematica \textbf{229} (2015), 45-72
	
	\bibitem{EG} R. Ellis and W.H. Gottschalk, \textit{Homomorphisms of transformation groups}, Transactions of the American Mathematical Society \textbf{94} (1960): 258-271
	
	\bibitem{F} S. Fomin, \emph{On dynamical systems with a purely point spectrum}, Dokl. Akad. Nauk SSSR, \textbf{77} (1951):29-32 (in Russian).
	
	\bibitem{F81} H. Furstenberg, \emph{Poincare recurrence and number theory}, Bull. Amer. Math. Soc. \textbf{5}.3 (1981): 211-234.
	
	\bibitem{FGJO} G. Fuhrmann, E. Glasner, T. J\"{a}ger, and C. Oertel, \emph{%
		Irregular model sets and tame dynamics}, arXiv preprint:1811.06283 (2018).
	
	\bibitem{FGL} G. Fuhrmann, M. Gr\"{o}ger, and D. Lenz, \emph{The structure
		of mean equicontinuous group actions}, arXiv preprint arXiv:1812.10219
	(2018).

        \bibitem{FK} G. Fuhrmann, and D. Kwietniak. On tameness of almost
          automorphic dynamical systems for general groups, arXiv
          preprint:1902.10780 (2019).
	
	\bibitem{G} F. Garc\'{\i}a-Ramos, \emph{Weak forms of topological and
		measure theoretical equicontinuity: relationships with discrete spectrum and
		sequence entropy}, Ergodic Theory and Dynamical Systems \textbf{37}.4
	(2017): 1211-1237.
	
	\bibitem{GM} F. Garc\'{\i}a-Ramos and B. Marcus, \emph{Mean sensitive, mean
		equicontinuous and almost periodic functions for dynamical systems,}
	Discrete and Continuous Dynamical Systems - A, \textbf{39}.2 (2019): 729-746.
	
	\bibitem{GLZ} F. Garc\'{\i}a-Ramos, J. Li, and R. Zhang, \emph{When is a
		dynamical system mean sensitive?,} Ergodic Theory and Dynamical Systems \textbf{39}.6 (2019): 1608-1636.
	
	\bibitem{G06} E. Glasner, \emph{"The structure of tame minimal dynamical systems}, Ergodic Theory and Dynamical Systems, \textbf{27}.6 (2007):1819-1837.
	
	\bibitem{G18} E. Glasner, \emph{The structure of tame minimal dynamical
		systems for general groups}, Inventiones mathematicae \textbf{211}.1 (2018):
	213-244.
	
	\bibitem{goodman} T. N. T.  Goodman, \emph{Topological sequence entropy}, Proceedings of the London Mathematical Society \textbf{3}.2 (1974): 331-350.
	
	\bibitem{HV} P. Halmos, and J. Von Neumann \emph{Operator methods in
          classical mechanics II}, Annals of Mathematics (2) \textbf{43} (1942):
          332-350.

	
	\bibitem{HLTY} W. Huang, J. Li, J.P. Thouvenot, L. Xu, and X. Ye, \emph{%
		Bounded complexity, mean equicontinuity and discrete spectrum}. arXiv
	preprint:1806.02980 (2018). Ergod. Th. \& Dynam. Sys., to appear.
	
	\bibitem{HLSY} W. Huang, S. Li, S. Shao and X. Ye, \emph{Null systems and
		sequence entropy pairs}, Ergod. Th. \& Dynam. Sys., \textbf{23} (2003):
	1505--1523.

	
	\bibitem{HSY} W. Huang, S. Shao and X. Ye, \emph{Pointwise convergence of multiple ergodic averages and strictly ergodic models}.
     arXiv:1406.5930, J. d'Anal. Math., to appear.

%     \bibitem{J70} R. Jewett, \emph{The prevalence of uniquely ergodic systems},
%     J. Math. Mech \textbf{19} (1970): 717-729.
	
	\bibitem{KL05} D. Kerr and H. Li, \emph{Dynamical entropy in Banach spaces},
	Inventiones mathematicae \textbf{162}.3 (2005): 649-686.
	
	\bibitem{KL07} D. Kerr and H. Li, \emph{Independence in topological and C*-dynamics}, Mathematische Annalen, \textbf{338}.4 (2007):
	869--926.
	
%	\bibitem{K72} W. Krieger, \emph{On unique ergodicity},
%	Proc. Sixth Berkeley Symposium on Math., Statist. and
%	Probability, June/July 1970, Vol. 1I, Univ. of California Press (1972): 327-345.
	
	\bibitem{kush} A. G. Kushnirenko, \emph{On metric invariants of entropy type}, Russian Mathematical Surveyss \textbf{22}.5 (1967): 53-61.
	
	\bibitem{L87} E. Lehrer, \emph{Topological mixing and uniquely ergodic systems}, Israel Journal of Mathematics \textbf{57}.2 (1972): 239-255.
	
	
	\bibitem{LTY} J. Li, S. Tu and X. Ye, \emph{Mean equicontinuity and mean
		sensitivity}, Ergodic Theory and Dynamical Systems \textbf{35}.8 (2015):
	2587-2612.
	
	\bibitem{QZ} J. Qiu and J. Zhao. \emph{A note on mean equicontinuity.}
	Journal of Dynamics and Differential Equations (to appear).
	
	
	\bibitem{S} B. Scarpellini, \emph{Stability properties of flows with pure point spectrum}, Journal of the London Mathematical Society, \textbf{2}.3 (1982): 451--464.
	
	\bibitem{V} J. Von Neumann \emph{Zur Operatorenmethode in der
          klassischen Mechanik}, Annals of Mathematics (2) \textbf{33} (1932): 587--642.
	
	\bibitem{YZ} X. Ye and R. Zhang, \emph{On sensitivity sets in topological
		dynamics}, Nonlinearity, \textbf{21} (2008): 1601--1620.
\end{thebibliography}
\end{document}